\definecolor{darkblue}{rgb}{0,0.1,.5}
\theoremstyle{plain}
\newtheorem{theorem}{Theorem}[section]
\newtheorem{proposition}[theorem]{Proposition}
\newtheorem{corollary}[theorem]{Corollary}
\theoremstyle{definition}
\newtheorem{definition}[theorem]{Definition}
\newtheorem{example}[theorem]{Example}
\theoremstyle{remark}
\numberwithin{equation}{section}
\def \begineq{\begin{equation}}
\def \endeq{\end{equation}}
\def \bb{\mathbb}
\def \RR{{\bb{R}}}
\def \ZZ{{\bb{Z}}}
\def \bideg{\mathop{\mathrm{bideg}}}
\def \Tor{\mathop{\mathrm{Tor}}\nolimits}
\def\IL{\mathit{IL}}
\def\CH{\mathit{CH}}
\def\HH{\mathit{HH}}
\def\PH{\mathcal{PH}}
\def\PHHZ{\mathcal{PHHZ}}
\def\BB{\mathit{BB}}
\def\BDB{\mathbb{BB}}
\def\dgh{\mathop{d_{\mathit{GH}}}}
\def\dpgh{\mathop{d'_{\mathit{GH}}}}
\def\HF{\mathit{HF}}
\def\zk{\mathcal Z_K}
\DeclareMathAlphabet{\mathbbmsl}{U}{bbm}{m}{sl}
\newcommand{\ts}{\textsc}
\begin{document}

\title[Stability theorem for bigraded barcodes]
{A stability theorem for bigraded persistence 
barcodes}

\author[Bahri]{Anthony Bahri}
\address{Department of Mathematics, Rider University, Lawrenceville, NJ 08648, USA}
\email{bahri@rider.edu}

\author[Limonchenko]{Ivan Limonchenko}
\address{Mathematical Institute of the Serbian Academy of Sciences and Arts (SASA),
Belgrade, Serbia}
\email{ivan.limoncenko@turing.mi.sanu.ac.rs}

\author[Panov]{Taras Panov}
\address{Department of Mathematics and Mechanics, Moscow
State University, Russia;\newline
Institute for Information Transmission Problems, Russian Academy of Sciences, Moscow, Russia;\newline
National Research University Higher School of Economics, Moscow, Russia}
\email{tpanov@mech.math.msu.su}

\author[Song]{Jongbaek Song}
\address{Department of Mathematics Education, Pusan National University, Busan, Republic of Korea}
\email{jongbaek.song@pusan.ac.kr}

\author[Stanley]{Donald Stanley}
\address{Department of Mathematics and Statistics, University of Regina, Regina, Saskatchewan, Canada}
\email{Donald.Stanley@uregina.ca}


\subjclass[2020]{Primary 57S12, 57Z25; Secondary 13F55, 55U10}

\keywords{Persistence module, persistent homology, barcode, moment-angle complex, polyhedral product, Stanley-Reisner ring, double homology}

\begin{abstract} 
We define bigraded persistent homology modules and bigraded barcodes of a finite pseudo-metric space $X$ using the ordinary and double homology of the moment-angle complex associated with the Vietoris--Rips filtration of~$X$. We prove a stability theorem for the bigraded persistent double homology modules and barcodes. 
\end{abstract}

\maketitle

\section{Introduction}\label{intro}
Given a finite pseudo-metric space $(X,d_X)$, the Vietoris--Rips filtration is a sequence $\{R(X,t)\}_{t\ge0}$ of filtered flag simplicial complexes  associated with~$X$. The simplicial homology of $R(X,t)$ is used to define the most basic persistence modules in data science, the persistent homology of~$X$.

In toric topology, a finer homological invariant of a simplicial complex $K$ is considered, the bigraded homology of the moment-angle complex $\mathcal Z_K$ associated with~$K$. The moment-angle complex $\mathcal Z_K$ is a toric space patched from products of discs and circles parametrized by simplices in a simplicial complex~$K$. It has a bigraded cell decomposition and the corresponding bigraded homology groups $H_{-i,2j}(\mathcal Z_K)$ contain the simplicial homology groups $H_k(K)$ as a direct summand. Algebraically, the bigraded homology modules $H_{-i,2j}(\mathcal Z_K)$ are the bigraded components of the $\Tor$-modules of the Stanley--Reisner ring $\mathbf k[K]$ and can be expressed via the Hochster decomposition as the sum of the reduced simplicial homology groups of all full subcomplexes $K_I$ of~$K$.

The bigraded homology of the moment-angle complexes $\mathcal Z_{R(X,t)}$ associated with the Vietoris--Rips filtration $\{R(X,t)\}_{t\ge0}$ can be used to define bigraded persistent homology modules and bigraded barcodes of a point cloud (data set)~$X$, as observed in~\cite{l-p-s-s}. Similar ideas have been pursued in~\cite{l-f-l-x23, li-xi22}. Simple examples show that bigraded persistent homology can distinguish between point clouds that are indistinguishable by the ordinary persistent homology (see Example~\ref{2sets}).

However, the approach to define bigraded persistence via the homology of moment-angle complexes has two fundamental drawbacks. First, calculating simplicial homology groups of all full subcomplexes (or samples) in $R(X,t)$ is much more resource-consuming from the computational viewpoint. Second, bigraded persistent homology lacks a stability property, which is important for applications of persistent homology in data science~\cite{c-e-h07}. 

Bigraded double homology of moment-angle complexes was introduced and studied in~\cite{l-p-s-s}. Besides theoretical interest in toric topology and polyhedral products theory, bigraded double homology fixes both drawbacks of ordinary bigraded homology mentioned above, therefore opening a way for a more efficient application in data analysis. 

Double homology $\HH_*(\mathcal Z_K)$ is the homology of the chain complex $\CH_*(\mathcal Z_K)=(H_*(\mathcal Z_K),\partial')$ obtained by endowing the bigraded homology of~$\mathcal Z_K$ with a second differential~$\partial'$. 
The bigraded double homology modules are smaller than the bigraded homology modules, and therefore ought to be more accessible from a computational viewpoint. More importantly, persistent homology modules defined from the bigraded double homology of the Vietoris--Rips filtration have a stability property, which says roughly that the bigraded barcode is robust to changes in the input data. 

In Section~\ref{sec-prelim} we review persistence modules, standard persistent homology and barcodes. 

In Section~\ref{macbbn} we define the bigraded persistent homology and persistent double homology modules and the corresponding bigraded barcodes.

The Stability Theorem for persistent homology says that the Gromov--Hausdorff distance between two finite pseudo-metric spaces (data sets) is bounded from below by half the interleaving distance between their persistent homology modules. The interleaving distance between persistent homology modules can be identified, via the Isometry Theorem, with the $\infty$-Wasserstein (bottleneck) distance between the corresponding barcodes. 

In Section~\ref{sec_stability_HZK} we prove a Stability Theorem for bigraded persistent double homology (Theorem~\ref{thm_BB_HH_stable}). This is done in two stages. First, we prove that bigraded persistent homology satisfies a stability property with respect to a modified Gromov--Hausdorff distance, in which the infimum is taken over bijective correspondences only (Theorem~\ref{thm_BB_stable}). Second, stability for bigraded persistent double homology is established using the invariance of double homology with respect to the doubling operation on simplicial complexes.

\subsection*{Acknowledgements}
Bahri was supported by the Simons Foundation Grant (426160). 
Limonchenko was supported by the Serbian Ministry of Science, Technological Development and Innovation through the Mathematical Institute of the Serbian Academy of Sciences and Arts. Research of Panov was funded within the framework of the HSE University Basic Research Program. Song was supported by the National Research Foundation of Korea(NRF) grant funded by the Korea government(MSIT) (RS-2025-00555914). He was also supported by a KIAS Individual Grant (SP076101) at Korea Institute for Advanced Study. Stanley is supported by NSERC RGPIN-05466-2020. 
The authors are grateful to Daniela Egas Santander for helpful conversations at the Fields Institute. This work began at the Fields Institute during the Thematic Program on Toric Topology and Polyhedral Products.

\section{Preliminaries}\label{sec-prelim}

\subsection{Persistence modules and barcodes}\label{sec_persistent_module}

Consider the set $\RR_{\geq 0}$ of nonnegative real numbers, which we regard as a poset category with respect to the standard inequality $\leq$.  A \emph{persistence module} is a (covariant) functor 
\begin{equation}\label{eq_persistent_module}
    \mathcal{M} \colon \mathbb{R}_{\geq 0} \to \mathbf{k}\ts{-mod}
\end{equation}
where $\mathbf{k}\ts{-mod}$ denotes the category of modules over a principal ideal domain $\mathbf{k}$. A persistence module can be thought of as a family of $\mathbf{k}$-modules $\{M_s\}_{s\in \RR_{\geq 0}}$ together with morphisms $\{\phi_{s_1,s_2} \colon M_{s_1} \to M_{s_2} \}_{s_1\leq  s_2}$ such that $\phi_{s,s}$ is the identity on $M_s$ and $\phi_{s_2,s_3}\circ \phi_{s_1,s_2} = \phi_{s_1,s_3}$ whenever $s_1\leq  s_2\leq  s_3$ in $\RR_{\geq 0}$. 

In a more general version of persistence modules, the domain category $\mathbb{R}_{\geq 0}$ can be replaced by any small category and the codomain $\mathbf{k}\ts{-mod}$ by any Grothendieck category. See for example \cite{b-s-s}. 

\begin{example}\label{ex_interval_module}
An \emph{interval} is a subset $I\subset\mathbb R_{\geq0}$ such that if $r\in I$ and $t\in I$ and $r<s<t$, then $s\in I$ (such an interval may be closed or open from each side, and may have infinite length).
Given an interval $I$, we define 
\[
\mathbf{k}^I_s
\colonequals \begin{cases} \mathbf{k} & \text{if } s\in I;\\
  \mathbf{0} & \text{otherwise}.
  \end{cases}
\]
Then, the family $\{ \mathbf{k}^I_s \}_{s\in \RR_{\geq 0}}$ together with the family $\{\phi_{s_1, s_2}\colon \mathbf{k}^I_{s_1} \to \mathbf{k}^I_{s_2}\}_{s_1 \leq s_2}$ of morphisms, where $\phi_{s_1, s_2}$ is the identity if $s_1, s_2\in I$ and the zero map otherwise, forms a persistence module. We denote it by $\mathbf{k}(I)$  and refer to it as the \emph{interval module} corresponding to $I$. 
\end{example}

The direct sum $\mathcal{M}\oplus \mathcal{N}$ of two persistence modules $\mathcal{M}=\{M_s\}_{s\in \RR_{\geq 0}}$ and $\mathcal{N}=\{N_s\}_{s\in \RR_{\geq 0}}$ is defined pointwise as the family of $\mathbf{k}$-modules $\{M_s\oplus N_s\}_{s\in \RR_{\geq 0}}$. The following decomposition theorem for a persistence module is originally due to Zomorodian and Carlsson~\cite[Theorem~2.1]{CZ05} for the case where the domain category is $\ZZ_{\geq 0}$ and $\mathbf{k}$ is a field. It is generalized to the case where the domain category is~$\RR_{\geq 0}$ or $\mathbb{R}$ by Crawley-Boevey \cite[Theorem 1.1]{Cr}. We also refer to \cite[Theorem~1.2]{BoCr} for a simpler proof. 

\begin{theorem}\label{prop_decomp_persitent_module}
Let $\mathcal{M}=\{M_s\}_{s\in \RR_{\geq 0}}$ be a persistence module as in \eqref{eq_persistent_module}. If  $\mathbf{k}$ is a field and all $M_s$ are finite dimensional $\mathbf{k}$-vector spaces, then 
\begin{equation}\label{eq_decomp_persistent_module}
\mathcal{M}=\bigoplus_{I\in B(\mathcal{M})} \mathbf{k}(I)
\end{equation}
for some multiset $B(\mathcal{M})$ of intervals in $\RR_{\geq 0}$, where $\mathbf{k}(I)$ is the interval module defined in Example \ref{ex_interval_module}.
\end{theorem}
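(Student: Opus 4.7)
The plan is to follow the Crawley-Boevey approach \cite{Cr, BoCr}: reduce the existence of the decomposition to the Krull--Remak--Schmidt property of the category of pointwise finite-dimensional persistence modules, and then classify the indecomposable objects as precisely the interval modules of Example~\ref{ex_interval_module}.

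First I would verify that every indecomposable pointwise finite-dimensional persistence module $\mathcal{M}$ has a local endomorphism ring. Once this is established, the Krull--Remak--Schmidt--Azumaya theorem guarantees both the existence and the essential uniqueness of a decomposition of $\mathcal{M}$ into indecomposable summands, and hence---if all indecomposables are classified as interval modules---the decomposition~\eqref{eq_decomp_persistent_module}. To prove locality of $\End(\mathcal{M})$, I would analyse an arbitrary idempotent $e = \{e_s\}_{s \in \RR_{\geq 0}} \in \End(\mathcal{M})$: the pointwise idempotents $e_s$ split each $M_s$ as a direct sum, and indecomposability of $\mathcal{M}$ then forces the global splitting to be trivial, i.e.\ $e = 0$ or $e = \mathrm{id}$.

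Next I would classify the indecomposables. Let $\mathcal{M}$ be an indecomposable pointwise finite-dimensional persistence module and let $I = \{s \in \RR_{\geq 0} : M_s \neq 0\}$ be its support. Three facts need to be shown: (i) $I$ is an interval in the sense of Example~\ref{ex_interval_module}; (ii) $\dim_{\mathbf{k}} M_s = 1$ for every $s \in I$; and (iii) the transition map $\phi_{s_1,s_2}$ is an isomorphism for all $s_1 \leq s_2$ in $I$. For (i) and (iii), a gap in the support or a non-isomorphism transition map would produce a nontrivial direct summand, contradicting indecomposability. For (ii), any proper subspace decomposition of some $M_s$ that is compatible with the system $\{\phi_{s_1,s_2}\}$ would again split $\mathcal{M}$. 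Together these yield $\mathcal{M} \cong \mathbf{k}(I)$, and assembling the indecomposable summands of the original module into the multiset $B(\mathcal{M})$ produces the stated decomposition.

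The main obstacle is the continuous indexing. For a $\ZZ_{\geq 0}$-indexed persistence module one can pass to a finitely generated graded module over the PID $\mathbf{k}[t]$ and invoke the structure theorem directly, as in \cite{CZ05}; but over $\RR_{\geq 0}$ the intervals $I$ may be half-open, open, or have irrational endpoints, and no analogous PID presentation is available. The technical heart of the proof therefore lies in establishing the locality of endomorphism rings of indecomposables in this continuous, pointwise-finite-dimensional setting---handling limits of idempotents and, if needed, transfinite induction on the poset $\RR_{\geq 0}$---after which the classification of indecomposables as interval modules follows by a largely combinatorial argument.
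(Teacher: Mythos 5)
The paper itself offers no proof of Theorem~\ref{prop_decomp_persitent_module}: it is quoted from Crawley-Boevey \cite{Cr}, with \cite{BoCr} cited for a simpler proof, so your attempt has to be measured against those. Your outline has a genuine logical gap at its centre. The Krull--Remak--Schmidt--Azumaya theorem does \emph{not} ``guarantee both the existence and the essential uniqueness'' of a decomposition into indecomposables: Azumaya's theorem asserts only that \emph{if} a module is a direct sum of modules with local endomorphism rings, \emph{then} that decomposition is essentially unique (and compatible with any other indecomposable decomposition). It produces no decomposition by itself. For persistence modules indexed by the uncountable totally ordered set $\RR_{\geq 0}$, the existence of an indecomposable (indeed interval) decomposition is precisely the hard content of \cite{Cr} and \cite{BoCr}; one cannot obtain it by Zorn's lemma or by splitting off summands one at a time, since pointwise finite-dimensionality does not bound the number of summands and transfinite peeling need not terminate in a direct sum. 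Your plan defers exactly this point to a theorem that does not supply it.

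There is a second, related gap: the idempotent analysis you propose (every idempotent $e\in\End(\mathcal M)$ is $0$ or $\mathrm{id}$) is simply a restatement of indecomposability, valid for indecomposable objects in any additive category; it does not establish that $\End(\mathcal M)$ is \emph{local}, i.e.\ that the non-invertible endomorphisms form an ideal. Locality is the hypothesis Azumaya actually needs, and proving it for pointwise finite-dimensional modules over $\RR_{\geq 0}$ requires a Fitting-lemma-type argument carried out along the uncountable index set --- this is where the real work in \cite{Cr} lies. Note also that the ``simpler proof'' of \cite{BoCr} sidesteps your two-step plan entirely: rather than classifying indecomposables and then invoking Krull--Remak--Schmidt, it directly constructs interval submodules of an arbitrary pointwise finite-dimensional module by choosing compatible vector-space complements associated with the births and deaths of classes, and shows these assemble into a direct sum decomposition. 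Your part (i)--(iii) classification of indecomposables as interval modules is plausible in outline, but the claim that a non-isomorphism transition map ``would produce a nontrivial direct summand'' again requires constructing a global complement over $\RR_{\geq 0}$, which is not a purely combinatorial matter. As it stands, the proposal identifies the right references and the right difficulty (the continuous index set) but does not actually bridge it.
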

The above result gives us a discrete invariant of a persistence module $\mathcal{M}$, namely the multiset $B(\mathcal{M})$  of \eqref{eq_decomp_persistent_module}, which is called  the \emph{barcode} of $\mathcal{M}$.

\subsection{The Vietoris--Rips filtration and persistent homology}\label{sec_persistent_homology}
A typical example that fits into Theorem~\ref{prop_decomp_persitent_module} is the persistent homology of filtered simplicial complexes. 

A \emph{finite pseudo-metric space} is a nonempty finite set $X$ together with a function 
\[
d_X\colon X\times X \to \RR_{\geq 0}
\]
satisfying  $d_X(x,x)=0$, $d_X(x,y)=d_X(y,x)$ and $d_X(x,z)\leq d_X(x,y)+d_X(y,z)$ for every $x,y,z\in X$. Note that two discrete points may have distance $0$ in~$X$. We can think of a finite pseudo-metric space $(X, d_X)$ as a point cloud. 

The \emph{Vietoris--Rips filtration} $\{R(X,t)\}_{t\geq 0}$ associated with $(X,d_X)$ consists of the Vietoris--Rips simplicial complexes~$R(X,t)$. The latter is defined as the clique complex of the graph whose vertex set is $X$ and two vertices $x$ and $y$ are connected by an edge if $d_X(x,y)\leq t$. We have a simplicial inclusion 
\begin{equation}\label{eq_inclusion_Rips_complex}
R(X,t_1) \hookrightarrow  R(X, t_2) 
\end{equation}
whenever $t_1 \leq t_2$. See Figure \ref{fig_barcode_example} for an example of Vietoris--Rips complexes.   
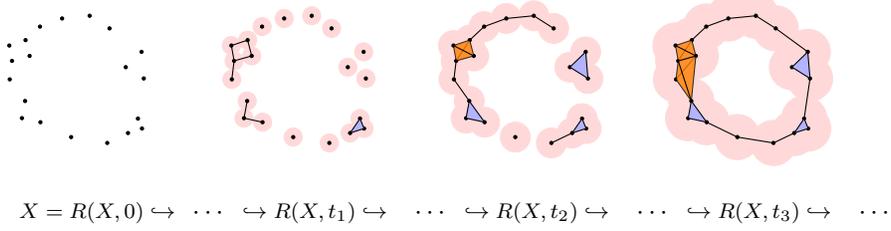
\begin{figure}
\begin{tikzpicture}[scale=0.6]
\coordinate (v1) at (9.74,7.01); 
\coordinate (v2) at (9.33,7.10);

\coordinate (v19) at (9.4,7.48);

\coordinate (v3) at (9.06,7.96);
\coordinate (v4) at (9.11,8.37);

\coordinate (v18) at (9.5,8.48);
\coordinate (v20) at (9.05,8.71);

\coordinate (v5) at (9.41,8.83);
\coordinate (v6) at (9.73,9.13);
\coordinate (v7) at (10.22,9.31);
\coordinate (v8) at (10.83,9.37);
\coordinate (v9) at (11.27,9.09);
\coordinate (v10) at (11.98,8.57);
\coordinate (v11) at (11.63,8.23);
\coordinate (v12) at (12.03,7.98);
\coordinate (v13) at (11.90,7.09);
\coordinate (v14) at (11.99,6.87);
\coordinate (v15) at (11.68,6.77);
\coordinate (v16) at (11.22,6.55);
\coordinate (v17) at (10.42,6.68);

\node at (11, 5) {\footnotesize$X=R(X,0)  \hookrightarrow  $};
\node at (13.5, 5) {$\cdots$};

\foreach \x in {1,2,...,20} {
\filldraw (v\x) circle (1pt);
}

\begin{scope}[xshift=140]
\coordinate (v1) at (9.74,7.01); 
\coordinate (v2) at (9.33,7.10);

\coordinate (v19) at (9.4,7.48);

\coordinate (v3) at (9.06,7.96);
\coordinate (v4) at (9.11,8.37);

\coordinate (v18) at (9.5,8.48);
\coordinate (v20) at (9.05,8.71);

\coordinate (v5) at (9.41,8.83);
\coordinate (v6) at (9.73,9.13);
\coordinate (v7) at (10.22,9.31);
\coordinate (v8) at (10.83,9.37);
\coordinate (v9) at (11.27,9.09);
\coordinate (v10) at (11.98,8.57);
\coordinate (v11) at (11.63,8.23);
\coordinate (v12) at (12.03,7.98);
\coordinate (v13) at (11.90,7.09);
\coordinate (v14) at (11.99,6.87);
\coordinate (v15) at (11.68,6.77);
\coordinate (v16) at (11.22,6.55);
\coordinate (v17) at (10.42,6.68);

\node at (10.9, 5) {\footnotesize$\hookrightarrow  R(X,t_1)  \hookrightarrow$};
\node at (13.5, 5) {$\cdots$};

\foreach \x in {1,2,...,20} {
\draw[fill=red!15, red!15] (v\x) circle (0.2);
}

\foreach \x in {1,2,...,20} {
\filldraw (v\x) circle (1pt);
}

\draw (v1)--(v2)--(v19);
\draw (v3)--(v4)--(v18)--(v5)--(v20)--(v4);
\draw[fill=blue!30] (v13)--(v14)--(v15)--cycle;
\end{scope}

\begin{scope}[xshift=280]
\coordinate (v1) at (9.74,7.01); 
\coordinate (v2) at (9.33,7.10);

\coordinate (v19) at (9.4,7.48);

\coordinate (v3) at (9.06,7.96);
\coordinate (v4) at (9.11,8.37);

\coordinate (v18) at (9.5,8.48);
\coordinate (v20) at (9.05,8.71);

\coordinate (v5) at (9.41,8.83);
\coordinate (v6) at (9.73,9.13);
\coordinate (v7) at (10.22,9.31);
\coordinate (v8) at (10.83,9.37);
\coordinate (v9) at (11.27,9.09);
\coordinate (v10) at (11.98,8.57);
\coordinate (v11) at (11.63,8.23);
\coordinate (v12) at (12.03,7.98);
\coordinate (v13) at (11.90,7.09);
\coordinate (v14) at (11.99,6.87);
\coordinate (v15) at (11.68,6.77);
\coordinate (v16) at (11.22,6.55);
\coordinate (v17) at (10.42,6.68);

\node at (10.9, 5) {\footnotesize$\hookrightarrow  R(X,t_2)  \hookrightarrow$};
\node at (13.5, 5) {$\cdots$};

\foreach \x in {1,2,...,20} {
\draw[red!15, fill=red!15] (v\x) circle (0.33);
}

\foreach \x in {1,2,...,20} {
\filldraw (v\x) circle (1pt);
}

\draw[fill=blue!30] (v1)--(v2)--(v19)--cycle;
\draw (v19)--(v3)--(v4);
\draw (v4)--(v5);
\draw[fill=orange, opacity=0.8] (v4)--(v18)--(v5)--(v20)--cycle;
\draw (v20)--(v18);
\draw (v5)--(v6)--(v7)--(v8)--(v9);
\draw[fill=blue!30] (v10)--(v11)--(v12)--cycle;

\draw[fill=blue!30] (v13)--(v14)--(v15)--cycle;
\draw (v15)--(v16);
\end{scope}

\begin{scope}[xshift=420]
\coordinate (v1) at (9.74,7.01); 
\coordinate (v2) at (9.33,7.10);

\coordinate (v19) at (9.4,7.48);

\coordinate (v3) at (9.06,7.96);
\coordinate (v4) at (9.11,8.37);

\coordinate (v18) at (9.5,8.48);
\coordinate (v20) at (9.05,8.71);

\coordinate (v5) at (9.41,8.83);
\coordinate (v6) at (9.73,9.13);
\coordinate (v7) at (10.22,9.31);
\coordinate (v8) at (10.83,9.37);
\coordinate (v9) at (11.27,9.09);
\coordinate (v10) at (11.98,8.57);
\coordinate (v11) at (11.63,8.23);
\coordinate (v12) at (12.03,7.98);
\coordinate (v13) at (11.90,7.09);
\coordinate (v14) at (11.99,6.87);
\coordinate (v15) at (11.68,6.77);
\coordinate (v16) at (11.22,6.55);
\coordinate (v17) at (10.42,6.68);

\node at (10.9, 5) {\footnotesize$\hookrightarrow  R(X,t_3)  \hookrightarrow$};
\node at (13.5, 5) {$\cdots$};

\foreach \x in {1,2,...,20} {
\draw[red!15, fill=red!15] (v\x) circle (0.5);
}

\foreach \x in {1,2,...,20} {
\filldraw (v\x) circle (1pt);
}

\draw[fill=blue!30] (v1)--(v2)--(v19)--cycle;
\draw (v3)--(v18);
\draw[fill=orange, opacity=0.8] (v19)--(v3)--(v4)--(v18)--cycle;
\draw (v19)--(v4);

\draw (v4)--(v5);
\draw[fill=orange, opacity=0.8] (v4)--(v18)--(v5)--(v20)--cycle;
\draw (v20)--(v18);
\draw (v5)--(v6)--(v7)--(v8)--(v9)--(v10);
\draw[fill=blue!30] (v10)--(v11)--(v12)--cycle;

\draw[fill=blue!30] (v13)--(v14)--(v15)--cycle;
\draw (v15)--(v16)--(v17)--(v1);
\draw (v12)--(v13);
\end{scope}

\end{tikzpicture}
    \caption{A point cloud and the corresponding Vietoris--Rips filtration.}
    \label{fig_barcode_example}
\end{figure}

The $n$-dimensional \emph{persistent homology} module 
\[
  \mathcal{PH}_n(X) \colon \RR_{\geq 0} \to \mathbf{k}\ts{-mod},\quad
  t\mapsto \widetilde{H}_n(R(X,t)),
\]
maps $t\in \RR_{\geq0}$ to the reduced simplicial homology group $\widetilde{H}_n(R(X,t))$ with coefficients in $\mathbf k$ and maps a morphism $t_1\leq t_2$ to the homomorphism $\widetilde{H}_n(R(X, t_1))\to \widetilde{H}_n(R(X, t_2))$ induced by~\eqref{eq_inclusion_Rips_complex}. We also define the graded persistent homology module
\begin{equation}\label{eq_PH}
 \mathcal{PH}(X)=\bigoplus_{n\ge0}\mathcal{PH}_n(X).
\end{equation}

Applying Theorem~\ref{prop_decomp_persitent_module} to the graded persistence module~\eqref{eq_PH} for a field $\mathbf{k}$, we obtain the barcode of $\mathcal{PH}(X)$, which we denote simply by~$B(X)$. In this case, $B(X)$ can be described more explicitly as follows.

A homology class $\alpha\in\widetilde H_n(R(X,t))$ is said to
\begin{itemize}
\item[(1)] \emph{be born} at $r$ if
\begin{itemize}
    \item[(i)] $\alpha\in\mathop\mathrm{im}\bigl(\widetilde H_n(R(X,r))\to \widetilde H_n(R(X,t))\bigr)$;
    \item[(ii)] $\alpha\notin\mathop\mathrm{im}\bigl(\widetilde H_n(R(X,p))\to \widetilde H_n(R(X,t))\bigr)$ for $p<r$,
\end{itemize}
\item[(2)] \emph{die} at $s$ if 
\begin{itemize}
    \item[(i)] $\alpha\in\ker\bigl(\widetilde H_n(R(X,t))\to \widetilde H_n(R(X,s))\bigr)$;
    \item[(ii)] $\alpha\notin\ker\bigl(\widetilde H_n(R(X,t))\to \widetilde H_n(R(X,q))\bigr)$ for $q<s$.
\end{itemize} 
We set $s=\infty$ if $\alpha\notin\ker\bigl(\widetilde H_n(R(X,t))\to \widetilde H_n(R(X,q))\bigr)$ for any~$q$.
\end{itemize}

If $\alpha\in\widetilde H_n(R(X,t))$ is born at $r$ and dies at $s$, then $[r,s)$ is called the \emph{persistence interval} of~$\alpha$. Then, Theorem \ref{prop_decomp_persitent_module} says that there exists a choice of generators of the homology groups $\widetilde{H}_\ast(R(X,t))$ whose persistence intervals are consistent with the barcode $B(X)$.
For each $t\in\mathbb R_{\geq 0}
$, the dimension of
$\widetilde H_n(R(X,t))$ is equal to the number of $n$-dimensional intervals in the barcode containing~$t$.

\begin{figure}
  \centering
    \includegraphics[width=0.9\linewidth]{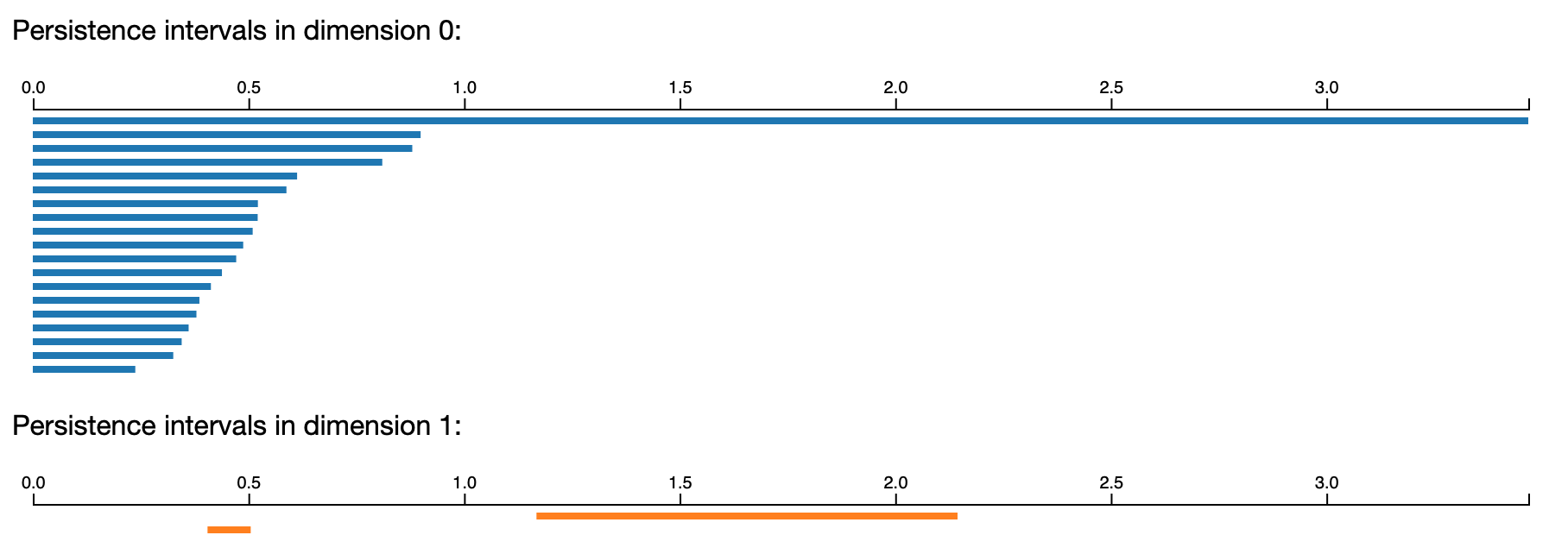}
    \caption{The barcode corresponding to the Vietoris--Rips complex in Figure \ref{fig_barcode_example}. The image is generated by Ripser for Python~\cite{Ripser}.}
    \label{fig_barcode}
\end{figure}

Since $(X, d_X)$ is a finite pseudo-metric space, it follows that $R(X,t)$ is contractible when $t$ is large enough.  
Therefore, the corresponding barcode $B(X)$ is a finite collection of half-open persistence intervals with finite lengths. If we use the unreduced simplicial homology in the definition of persistent homology, then the barcode would contain a single infinite interval $[0,\infty)$ in dimension~$0$. Figure~\ref{fig_barcode} displays the unreduced barcode corresponding to the Vietoris--Rips filtration in Figure~\ref{fig_barcode_example}.

\section{Bigraded persistence and double 
cohomology}\label{macbbn}

\subsection{Cohomology of moment-angle complexes}\label{subsec_HZK}
Let $K$ be a simplicial complex on the set $[m]=\{1,2,\ldots,m\}$. We refer to a subset $I=\{ i_1,\ldots,i_k \} \subset[m]$ that is contained in $K$ as a \emph{simplex}. A one-element simplex $\{i\}\in K$ is a \emph{vertex}. We also assume that $\varnothing\in K$ and, unless explicitly stated otherwise, that $K$ contains all one-element subsets $\{i\}\in[m]$ (that is, $K$ is a simplicial complex without \emph{ghost vertices}). 

Let $D^2$ be the unit disc in $\mathbb R^2$, and let $S^1$ be its boundary circle.
For each simplex $I\in K$, consider the topological space  
\begin{equation*}
  (D^2, S^1)^I \colonequals \{(z_1, \dots, z_m)\in (D^2)^m \colon |z_j|=1 \text{ if } j\notin I\}\subset (D^2)^m.
\end{equation*}
Note that $(D^2, S^1)^I$ is a natural subspace of  $(D^2, S^1)^J$ whenever $I\subset J$. The \emph{moment-angle complex} corresponding to $K$ is 
\[
  \zk\colonequals\bigcup_{I\in K}(D^2,S^1)^I\subset(D^2)^m. 
\]
We refer to \cite[Chapter~4]{BP-book} for more details and examples. 

Let $\mathbf k$ be a coefficient ring, which we assume to be a principal ideal domain. The \emph{face ring} (the \emph{Stanley--Reisner ring}) of a simplicial complex $K$ is
\[
  \mathbf k[K]\colonequals \mathbf k[v_1, \dots, v_m]/\mathcal{I}_K, 
\]
where $\mathcal{I}_K$ is the ideal generated by square-free monomials $\prod_{i\in I} v_i$ for which $I\subset[m]$ is not a simplex of~$K$. 

The following theorem summarizes several presentations of the cohomology ring~$H^\ast(\zk)$.

\begin{theorem}[{\cite[Section 4.5]{BP-book}}]\label{BPtheorem}
There
are isomorphisms of bigraded commutative algebras
\begin{align}
  H^*(\zk)&\cong\Tor_{\mathbf k[v_1,\ldots,v_m]}\bigl(\mathbf k,\mathbf k[K]\bigr) \nonumber  \\
  &\cong H\bigl(\Lambda[u_1,\ldots,u_m]\otimes
  \mathbf k[K],d\bigr) \label{eq_Koszul_cohom}\\
  &\cong \bigoplus_{I\subset[m]}\widetilde H^*(K_I). \label{eq_Hochster_decomp}
\end{align}
Here, \eqref{eq_Koszul_cohom} is cohomology of the
bigraded algebra with bidegrees $\bideg u_i=(-1,2)$, $\bideg v_i=(0,2)$ and differential $d=\sum_{i=1}^mv_i\frac{\partial}{\partial u_i}$ of bidegree $(1,0)$ (the Koszul complex). In \eqref{eq_Hochster_decomp}, each component $\widetilde H^*(K_I)$ denotes the reduced simplicial cohomology
of the full subcomplex $K_I\subset K$ (the restriction of $K$
to $I\subset[m]$). The last isomorphism is the sum of isomorphisms
\[
  H^p(\zk)\cong
  \sum_{I\subset[m]}\widetilde H^{p-|I|-1}(K_I),
\]
and the ring structure is given by the maps
\begin{equation}\label{eq_Hoschter_product}
  H^{p-|I|-1}(K_I)\otimes H^{q-|J|-1}(K_J)\to
  H^{p+q-|I|-|J|-1}(K_{I\cup J})
\end{equation}
which are induced by the canonical simplicial maps $K_{I\cup
J}\to K_I\mathbin{*} K_J $  for $I\cap J=\varnothing$ and zero otherwise. 
\end{theorem}

Here, $K_I \ast K_J=\{L\sqcup M\colon L\in K_I,\, M\in K_J\}$ is the join of $K_I$ and $K_J$. The map~\eqref{eq_Hoschter_product} can be described via the homotopy equivalence $K_I \ast K_J \simeq \Sigma (K_I \wedge K_J)$ together with the K\"unneth formula for the smash product.

Isomorphism~\eqref{eq_Hochster_decomp} is often referred to as the \emph{Hochster decomposition}, as it comes from Hochster's theorem describing $\Tor_{\mathbf k[v_1,\ldots,v_m]}(\mathbf k,\mathbf k[K])$ as a sum of cohomology of full subcomplexes~$K_I$.

The bigraded components of cohomology of $\zk$ are given by
\[
  H^{-i,2j}(\zk)\cong H^{-i,2j}\bigl(\Lambda[u_1,\ldots,u_m]\otimes
  \mathbf k[K],d\bigr)
  \cong\bigoplus_{J\subset[m],\, |J|=j}\widetilde H^{j-i-1}(K_J),\quad
\]
so that
\[
  H^p (\mathcal{Z}_K) = \bigoplus_{-i+2j=p}H^{-i,2j} (\mathcal{Z}_K)\cong \bigoplus_{J\subset[m]}\widetilde{H}^{p-|J|-1}(K_J),
\]
There is a similar description of bigraded homology of $\zk$:
\begin{equation*}
  H_p (\mathcal{Z}_K) = \bigoplus_{-i+2j=p}H_{-i,2j} (\mathcal{Z}_K)\cong \bigoplus_{J\subset[m]}\widetilde{H}_{p-|J|-1}(K_J).
\end{equation*}

The construction of moment-angle complex and its bigraded homology is functorial with respect to inclusion of simplicial complexes:

\begin{proposition}\label{funin}
An inclusion of subcomplex $K\subset L$ induces an inclusion $\mathcal Z_K\subset\mathcal Z_L$ and a homomorphism $H_{-i,2j}(\mathcal Z_K)\to H_{-i,2j}(\mathcal Z_L)$ of bigraded homology modules.
\end{proposition}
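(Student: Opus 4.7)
The proposition has two assertions: the topological inclusion $\mathcal Z_K \subset \mathcal Z_L$, and the compatibility of the induced map on homology with the bigrading. My plan is to treat these separately.

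For the topological inclusion, I would argue directly from the definition. Taking $K$ and $L$ to be simplicial complexes on a common vertex set $[m]$, for each simplex $I \in K$ the hypothesis $K \subset L$ gives $I \in L$, so $(D^2, S^1)^I$ is one of the subspaces whose union defines $\mathcal Z_L$. Taking the union over $I \in K$ yields
\[
\mathcal Z_K = \bigcup_{I \in K}(D^2,S^1)^I \;\subset\; \bigcup_{I \in L}(D^2,S^1)^I = \mathcal Z_L
\]
inside $(D^2)^m$. If $K$ and $L$ have different vertex sets, the polyhedral product construction is functorial in $K$, and I would account for the extra vertices of $L$ by enlarging the ambient product with additional circle factors.

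For the bigraded compatibility, my plan is to invoke the Hochster decomposition of Theorem \ref{BPtheorem}, which gives
\[
H_{-i,2j}(\mathcal Z_K) \cong \bigoplus_{J \subset [m],\, |J|=j}\widetilde H_{j-i-1}(K_J),
\]
and analogously for $L$. The inclusion $K \subset L$ restricts, for each $J \subset [m]$, to an inclusion of full subcomplexes $K_J \subset L_J$, which induces a homomorphism $\widetilde H_{j-i-1}(K_J) \to \widetilde H_{j-i-1}(L_J)$ on reduced simplicial homology. Summing these maps over $|J| = j$ produces a homomorphism $H_{-i,2j}(\mathcal Z_K) \to H_{-i,2j}(\mathcal Z_L)$ which is bigraded by construction.

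The main point requiring actual work is a naturality check: one must verify that the summand-wise map defined via the Hochster decomposition agrees with the map on homology induced by the topological inclusion $\mathcal Z_K \hookrightarrow \mathcal Z_L$. To do this I would pass to a bigraded cellular chain model of $\mathcal Z_K$ (in which each coordinate factor $(D^2_i, S^1_i)$ contributes cells of bidegrees $(0,0)$, $(-1,2)$ and $(0,2)$), note that the inclusion $\mathcal Z_K \hookrightarrow \mathcal Z_L$ is cellular for this structure so the induced chain map preserves the bigrading, and then trace this chain map through the Hochster identification in Theorem \ref{BPtheorem}. I expect this naturality verification to be the only non-formal step; it is essentially built into the identifications in Theorem \ref{BPtheorem}, so once those are accepted the argument is a bookkeeping exercise.
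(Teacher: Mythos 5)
The paper states this proposition without proof, treating it as immediate from the definitions, so there is no official argument to compare against; your write-up is correct and supplies the details the authors omit. Your first part (the union over $I\in K$ sits inside the union over $I\in L$) is exactly the right observation, with the caveat that in the paper's applications (the Vietoris--Rips filtration of a fixed $X$) the two complexes always share the vertex set $[m]$, so the extra-circle-factor adjustment you mention is never needed there --- and if it were needed, it would genuinely change the bigrading, since a ghost vertex shifts the second degree in the Hochster decomposition. For the second part, note that the bare statement of the proposition already follows from the cellular argument you relegate to the naturality check: $\mathcal Z_K$ and $\mathcal Z_L$ carry compatible bigraded cell decompositions (equivalently, the inclusion induces a bidegree-preserving map of Koszul complexes $\Lambda[u_1,\ldots,u_m]\otimes\mathbf k[L]\to\Lambda[u_1,\ldots,u_m]\otimes\mathbf k[K]$ in cohomology, or of the corresponding chain complexes in homology), so the induced map on homology preserves $(-i,2j)$ and you are done. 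The Hochster-decomposition route you foreground, together with the naturality check identifying the summand-wise map $\bigoplus_J\widetilde H_{j-i-1}(K_J)\to\bigoplus_J\widetilde H_{j-i-1}(L_J)$ with the topologically induced map, proves something slightly stronger, and that stronger compatibility is in fact what the paper silently uses later when it decomposes $\mathcal{PHZ}(X)=\bigoplus_{J\subset X}\mathcal{PH}(J)$ as persistence modules in \eqref{hoch-pers}; so your extra work is not wasted, just not required for the proposition as literally stated.
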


When $\mathbf k$ is a field, the \emph{bigraded Betti numbers} of $K$ (with coefficients in~$\mathbf k$) are defined by
\[
  \beta_{-i,2j}(K)\colonequals \dim H_{-i, 2j}(\mathcal{Z}_K)=\sum_{J\subset[m],\, |J|=j} \dim \widetilde{H}_{j-i-1}(K_J).
\]  
In particular, when $j=m$, we obtain $\beta_{-i,2m}(K)=\dim\widetilde{H}_{m-i-1}(K)$.
Moreover, $\dim \widetilde{H}^{j-i-1}(K_J)$ agrees with $\dim \widetilde{H}_{j-i-1}(K_J)$ obviously. 

Dimensional considerations imply that possible locations of nonzero bigraded Betti numbers form a trapezoid in the table, see Figure~\ref{fig_bi_Betti} where we assume that $K$ is $(n-1)$-dimensional. We refer to \cite[Section 4.5]{BP-book} for more details. 
\begin{figure}
\begin{tikzpicture}
\draw[fill=yellow] (0.5,4.5)--(0.5,5)--(2.5,5)--(2.5,4.5)--cycle;
\draw (0,0) grid[step=.5cm] (5,5);
\draw[stealth-] (-0.5,0)--(5,0);
\draw[-stealth] (5,0)--(5,5.5);
\node[right] at (5,5.5) {\footnotesize$2j$};
\node[left] at (-0.5,0) {\footnotesize$-i$};
\node[below] at (4.75,0) {\scriptsize$0$};
\node[below] at (4.25,0) {\scriptsize$-1$};
\node[right] at (5,0.25) {\scriptsize$0$};
\node[right] at (5,0.75) {\scriptsize$2$};
\node[right] at (5,1.25) {\scriptsize$4$};

\node at (4.75, 0.25) {\scriptsize$1$};
\foreach \x in {1,2,...,8}
	{\node at (4.75-.5*\x,0.75+.5*\x) {\scriptsize$\ast$};}
\foreach \x in {1,2,...,7}
	{\node at (4.75-.5*\x,1.25+.5*\x) {\scriptsize$\ast$};}
\foreach \x in {1,2,...,6}
	{\node at (4.75-.5*\x,1.75+.5*\x) {\scriptsize$\ast$};}
\foreach \x in {1,2,...,5}
	{\node at (4.75-.5*\x,2.25+.5*\x) {\scriptsize$\ast$};}

\foreach \x in {1,2,...,8}
	{\draw[red, very thick] (.5*\x, 5.5-.5*\x)--(.5*\x, 5-.5*\x)--(.5*\x+0.5, 5-.5*\x)  ;}
\foreach \x in {1,2,3,4}
	{\draw[red, very thick] (2+0.5*\x, 5.5-.5*\x)--(2+0.5*\x, 5-.5*\x)--(2.5+.5*\x, 5-.5*\x)  ;}
\draw[red, very thick] (0.5,5)--(2.5,5);
\draw[red, very thick] (4.5,1)--(4.5,3);

\node[right] at (5, 4.75) {\scriptsize$2m$};

\draw[<-] (0.75, 4.85)--(0.5, 5.25); \node[above] at (0.5, 5.25) {\scriptsize$\widetilde{H}_0(K)$};
\draw[<-] (2.25, 4.85)--(2.5, 5.25); \node[above] at (2.5, 5.25) {\scriptsize$\widetilde{H}_{n-1}(K)$};
\node[above] at (1.5, 5.25) {\scriptsize$\cdots$};

\draw[<-] (0.75, -0.1)--(0.75, -0.5);
\node[below] at (0.75, -0.5) {\scriptsize$-(m-1)$};
\draw[<-] (2.25, -0.1)--(2.25, -0.5);
\node[below] at (2.25, -0.5) {\scriptsize$-(m-n)$};
\end{tikzpicture}
\caption{Table of bigraded Betti numbers.}
\label{fig_bi_Betti}
\end{figure}
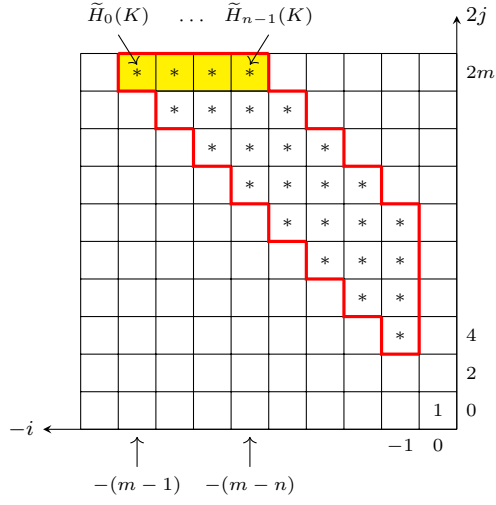

\subsection{Bigraded persistence and barcodes}\label{subsec_Bigraded_barcode}

\begin{definition}\label{bgph}
Let $(X,d_X)$ be a finite pseudo-metric space and $\{R(X,t)\}_{t\geq 0}$ its associated Vietoris--Rips filtration.
We define the \emph{persistent homology} module of bidegree $(-i,2j)$ as 
\[
  \mathcal{PHZ}_{-i,2j}(X) \colon \RR_{\geq 0} \to \mathbf{k}\ts{-mod},\quad
  t\mapsto H_{-i,2j}(\mathcal Z_{R(X,t)}).
\]
It maps $t\in \RR_{\geq0}$ to the bigraded homology module  $H_{-i,2j}(\mathcal Z_{R(X,t)})$ and maps a morphism $t_1\leq t_2$ to the homomorphism $H_{-i,2j}(\mathcal Z_{R(X,t_1)})\to H_{-i,2j}(\mathcal Z_{R(X,t_2)})$, see Proposition~\ref{funin}. We also define the bigraded persistent homology module
\begin{equation}\label{eq_PHZ}
 \mathcal{PHZ}(X)=
 \bigoplus_{\substack{0\leq i\leq m-1; \\ 0\leq j \leq m}}\mathcal{PHZ}_{-i,2j}(X).
\end{equation}

For any subset $J\subset X$, the Vietoris--Rips complex $R(J,t)$ is the full subcomplex $R(X,t)_J$ of $R(X, t)$. Hence,
the Hochster decomposition \eqref{eq_Hochster_decomp} yields the decomposition of the persistence modules:
\begin{equation}\label{hoch-pers}
  \mathcal{PHZ}(X)=\bigoplus_{J \subset X} \mathcal{PH}(J).
\end{equation}

The \emph{bigraded barcode} $\BB(X)$ is the collection of persistence intervals of generators of the bigraded homology groups $H_{-i,2j}(\mathcal Z_{R(X,t)})$. We note that bigraded persistence intervals are defined in the same way as for ordinary persistent homology, see Theorem \ref{prop_decomp_persitent_module} and Subsection~\ref{sec_persistent_homology}. For each $t\in\mathbb R_{\geq 0}$, the dimension of $H_{-i,2j}(\mathcal Z_{R(X,t)})$ is equal to the number of persistence intervals in the bigraded barcode of bidegree $(-i,2j)$ containing~$t$.
\end{definition}

We note that the bigraded persistent homology module  $\mathcal{PHZ}(X)$ defined in~\eqref{eq_PHZ} is related to subsampling from data sets \cite{GM-arXiv, CFLMRW}. Studying the set of all subsamples of size $k$ gives what is called the \emph{distributed persistence} in \cite{SWB22}. Indeed, the $k$-distributed Vietoris--Rips persistence agrees with the persistence module $\bigoplus_{0\leq i \leq m-1}\mathcal{PHZ}_{-i, 2k}(X)$. Here, we emphasize that the module $\mathcal{PHZ}(X)$ has richer algebraic structures such as cohomology products summarized in Theorem~\ref{BPtheorem}. We also refer to \cite{BBC} for abundant algebraic structures other than cohomology products.

The bigraded barcode of $X$ is drawn as a diagram in $3$-dimensional space, which contains the original barcode of $X$ in its top level. See Figure~\ref{fig_bigr_barcode}. 
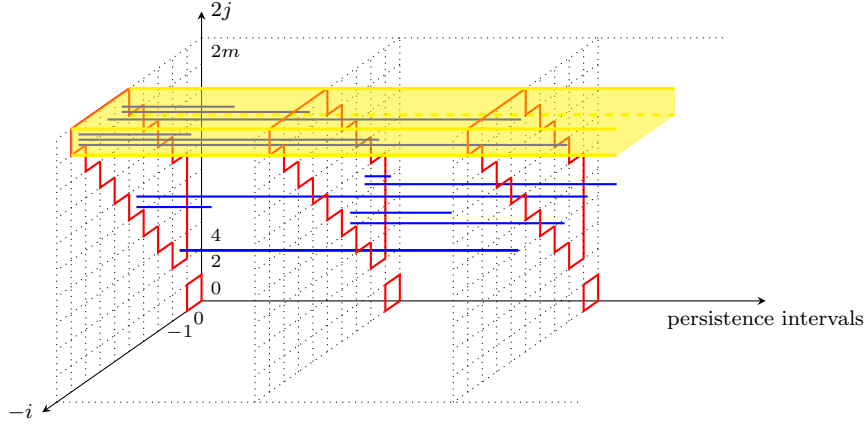
\begin{figure}
\begin{tikzpicture}[scale=0.5, yscale=0.7]
\draw[-stealth] (xyz cs:x=0) -- (xyz cs:x=15) node[below] {\footnotesize{persistence intervals}};
\draw[-stealth] (xyz cs:y=0) -- (xyz cs:y=11) node[right] {\footnotesize$2j$};
\draw[-stealth] (xyz cs:z=0) -- (xyz cs:z=11) node[left] {\footnotesize$-i$};

\node[right] at (0,9.5,0) {\scriptsize$2m$};
\node[right] at (0,.5,0) {\scriptsize$0$};
\node[right] at (0,1.5,0) {\scriptsize$2$};
\node[right] at (0,2.5,0) {\scriptsize$4$};

\node[below] at (0,0,0.2) {\scriptsize$0$};
\node[below] at (0,0,1.5) {\scriptsize$-1$};

\draw[yellow, very thick, dashed] (0,9,5)--(14.5,9,5);

\begin{scope}
\foreach \coo in {1,2,...,10}
	{\draw[dotted] (0,\coo,0)--(0,\coo,10);
	\draw[dotted] (0,0,\coo)--(0,10,\coo);
	}

\draw[red, thick] (0,0,0)--(0,1,0)--(0,1,1)--(0,0,1)--cycle;
\foreach \bd in {1,2,...,8}
	{\draw[red, thick] (0,11-\bd,10-\bd)--(0,10-\bd,10-\bd)--(0,10-\bd,9-\bd);
	}
\foreach \bd in {1,2,3,4}
	{\draw[red, thick] (0,11-\bd, 6-\bd)--(0,10-\bd, 6-\bd)--(0,10-\bd, 5-\bd);
	}
\draw[red, thick] (0,10,5)--(0,10,9);
\draw[red, thick] (0,2,1)--(0,6,1);
\end{scope}

\begin{scope}[xshift=150]
\foreach \bd in {1,2,3,4}
	{\draw[red, thick] (0,11-\bd, 6-\bd)--(0,10-\bd, 6-\bd)--(0,10-\bd, 5-\bd);
	}
\draw[red, thick] (0,2,1)--(0,6,1);
\end{scope}

\begin{scope}[xshift=300]
\foreach \bd in {1,2,3,4}
	{\draw[red, thick] (0,11-\bd, 6-\bd)--(0,10-\bd, 6-\bd)--(0,10-\bd, 5-\bd);
	}
\draw[red, thick] (0,2,1)--(0,6,1);
\end{scope}

\draw[blue, thick] (0,9.6, 8.5)--(3,9.6, 8.5);
\draw[blue, thick] (0,9.4, 8.5)--(8,9.4, 8.5);
\draw[blue, thick] (0,9.2, 8.5)--(13,9.2, 8.5);
\draw[blue, thick] (0,9.5, 5.5)--(3,9.5, 5.5);
\draw[blue, thick] (0,9.3, 5.5)--(5,9.3, 5.5);
\draw[blue, thick] (0,9.4, 6.5)--(11,9.4, 6.5);

\draw[blue, thick] (0,2.5,1.5)--(9,2.5, 1.5);
\draw[blue, thick] (0,2.5,1.5)--(9,2.5, 1.5);
\draw[blue, thick] (0,2.5,1.5)--(9,2.5, 1.5);

\draw[blue, thick] (0,5.7,4.5)--(12,5.7,4.5);
\draw[blue, thick] (0,5.3,4.5)--(2,5.3,4.5);

\draw[blue, thick] (5.3,4.7,3.5)--(8,4.7, 3.5);
\draw[blue, thick] (5.3,4.3,3.5)--(11,4.3, 3.5);

\draw[blue, thick] (5.3,5.7,2.5)--(6,5.7,2.5);
\draw[blue, thick] (5.3,5.4,2.5)--(12,5.4,2.5);

\begin{scope}[xshift=150]
\foreach \coo in {0,1,2,...,10}
	{\draw[dotted] (0,\coo,0)--(0,\coo,10);
	\draw[dotted] (0,0,\coo)--(0,10,\coo);
	}

\draw[red, thick] (0,0,0)--(0,1,0)--(0,1,1)--(0,0,1)--cycle;
\foreach \bd in {1,2,...,8}
	{\draw[red, thick] (0,11-\bd,10-\bd)--(0,10-\bd,10-\bd)--(0,10-\bd,9-\bd);
	}
\draw[red, thick] (0,10,5)--(0,10,9);
\end{scope}

\begin{scope}[xshift=300]
\foreach \coo in {0,1,2,...,10}
	{\draw[dotted] (0,\coo,0)--(0,\coo,10);
	\draw[dotted] (0,0,\coo)--(0,10,\coo);
	}

\draw[red, thick] (0,0,0)--(0,1,0)--(0,1,1)--(0,0,1)--cycle;
\foreach \bd in {1,2,...,8}
	{\draw[red, thick] (0,11-\bd,10-\bd)--(0,10-\bd,10-\bd)--(0,10-\bd,9-\bd);
	}
\draw[red, thick] (0,10,5)--(0,10,9);
\end{scope}

\draw[fill=yellow, yellow, opacity=0.5] (0,10,9)--(0,10,5)--(14.5,10,5)--(14.5,9,5)--(14.5,9,9)--(0,9,9)--cycle;
\draw[yellow, very thick] (0,10,5)--(14.5,10,5);
\draw[yellow, very thick] (0,10,9)--(14.5, 10,9);
\draw[yellow, very thick] (0,9,9)--(14.5, 9,9);

\draw[dotted] (0,0,10)--(14, 0,10);
\draw[dotted] (0,10,0)--(14, 10,0);

\end{tikzpicture}
\caption{A bigraded barcode.}
\label{fig_bigr_barcode}
\end{figure}
Here is a simple example of two sequences of point clouds with the same barcodes, but different bigraded barcodes. 

\begin{example}\label{2sets}
Let $X_1$ and $X_2$ consist of three points $(0,0), (2,0), (0,4)$ and $(0,0), (2,0), (1, \sqrt{15})$ in~$\RR^2$, respectively. The Vietoris--Rips filtration $\{R(X_1,t)\}_{t\geq 0}$ at $t=0,2,4,2\sqrt5$ is shown in Figure~\ref{fig_VR_cplxes_of_3_pts}, top, and the Vietoris--Rips filtration $\{R(X_2,t)\}_{t\geq 0}$ at $t=0,2,4$ is shown in Figure~\ref{fig_VR_cplxes_of_3_pts}, bottom. The corresponding bigraded barcodes are shown in Figure~\ref{fig_big_barcode_for_3_pts}.  We notice that the two bars in the top levels of the bigraded barcodes are identical. These two bars represent the ordinary (single graded) barcodes of $X_1$ and $X_2$, respectively. The two data sets $X_1$ and $X_2$ are distinguished by their bigraded barcodes. 

\begin{figure}[ht]
\begin{tikzpicture}
\begin{scope}[scale=0.5]
\node at (-1.5, 1) {$X_1$};
\draw[-stealth] (-0.5,0)--(2.5,0);
\draw[-stealth] (0,-0.5)--(0,2.5);
\draw[fill=blue, blue] (0,0) circle (3pt);
\draw[fill=blue, blue] (1,0) circle (3pt);
\draw[fill=blue, blue] (0,2) circle (3pt);

\begin{scope}[xshift=110]
\draw[-stealth] (-0.5,0)--(2.5,0);
\draw[-stealth] (0,-0.5)--(0,2.5);
\draw[fill=blue, blue] (0,0) circle (3pt);
\draw[fill=blue, blue] (1,0) circle (3pt);
\draw[fill=blue, blue] (0,2) circle (3pt);
\draw[very thick, blue] (0,0)--(1,0);
\end{scope}

\begin{scope}[xshift=220]
\draw[-stealth] (-0.5,0)--(2.5,0);
\draw[-stealth] (0,-0.5)--(0,2.5);
\draw[fill=blue, blue] (0,0) circle (3pt);
\draw[fill=blue, blue] (1,0) circle (3pt);
\draw[fill=blue, blue] (0,2) circle (3pt);
\draw[very thick, blue] (0,0)--(1,0);
\draw[very thick, blue] (0,0)--(0,2);
\end{scope}

\begin{scope}[xshift=330]
\draw[-stealth] (-0.5,0)--(2.5,0);
\draw[-stealth] (0,-0.5)--(0,2.5);
\draw[fill=blue, blue] (0,0) circle (3pt);
\draw[fill=blue, blue] (1,0) circle (3pt);
\draw[fill=blue, blue] (0,2) circle (3pt);
\draw[very thick, blue, fill=blue!30] (0,0)--(1,0)--(0,2)--cycle;
\end{scope}

\begin{scope}[yshift=-120]
\node at (-1.5, 1) {$X_2$};
\draw[-stealth] (-0.5,0)--(2.5,0);
\draw[-stealth] (0,-0.5)--(0,2.5);
\draw[fill=red, red] (0,0) circle (3pt);
\draw[fill=red, red] (1,0) circle (3pt);
\draw[fill=red, red] (1/2,1.7320508076) circle (3pt);

\begin{scope}[xshift=110]
\draw[-stealth] (-0.5,0)--(2.5,0);
\draw[-stealth] (0,-0.5)--(0,2.5);
\draw[fill=red, red] (0,0) circle (3pt);
\draw[fill=red, red] (1,0) circle (3pt);
\draw[fill=red, red] (1/2,1.7320508076) circle (3pt);
\draw[very thick, red] (0,0)--(1,0);
\end{scope}

\begin{scope}[xshift=220]
\draw[-stealth] (-0.5,0)--(2.5,0);
\draw[-stealth] (0,-0.5)--(0,2.5);
\draw[fill=red, red] (0,0) circle (3pt);
\draw[fill=red, red] (1,0) circle (3pt);
\draw[fill=red, red] (1/2,1.7320508076) circle (3pt);
\draw[very thick, red, fill=red!20] (0,0)--(1/2,1.7320508076)--(1,0)--cycle;
\end{scope}
\end{scope}
\end{scope}
\end{tikzpicture}
\caption{Two sequences of Vietoris--Rips complexes.}
\label{fig_VR_cplxes_of_3_pts}
\end{figure}
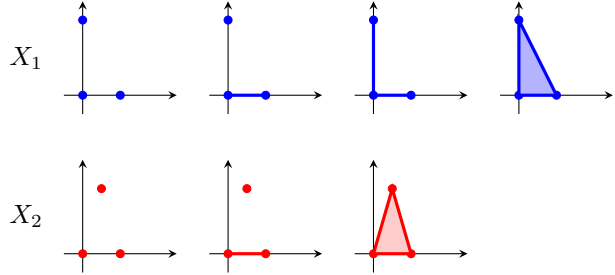

\begin{figure}
\begin{tikzpicture}[scale=0.65, yscale=0.8]
\draw[-stealth] (xyz cs:x=0) -- (xyz cs:x=7) node[right] {\scriptsize$t$};
\draw[-stealth] (xyz cs:y=0) -- (xyz cs:y=4.5) node[right] {\scriptsize$2j$};
\draw[-stealth] (xyz cs:z=0) -- (xyz cs:z=4.5) node[left] {\scriptsize$-i$};

\node[below] at (2.2,0,0) {\scriptsize$2$};
\node[below] at (4.4,0,0) {\scriptsize$4$};
\node[below] at (6.6,0,0) {\scriptsize$2\sqrt{5}$};

\draw[blue, thick] (0,3.4,2.5)--(4.2,3.4,2.5); 
\draw[fill=blue, blue] (0,3.4,2.5) circle (1.5pt); 
\draw[fill=blue, blue] (2.1,3.4,2.5) circle (1.5pt);

\draw[blue, thick] (0,3.6,2.5)--(2.1,3.6,2.5);
\draw[fill=blue, blue] (0,3.6,2.5) circle (1.5pt); 

\draw[blue, thick] (0,2.7,1.5)--(2.1,2.7,1.5);
\draw[fill=blue, blue] (0,2.7,1.5) circle (1.5pt);

\draw[blue, thick] (0,2.5,1.5)--(4.2,2.5,1.5);
\draw[fill=blue, blue] (0,2.5,1.5) circle (1.5pt);
\draw[fill=blue, blue] (2.1,2.5,1.5) circle (1.5pt);

\draw[blue, thick] (0,2.3,1.5)--(6.3,2.3,1.5);
\draw[fill=blue, blue] (0,2.3,1.5) circle (1.5pt);
\draw[fill=blue, blue] (2.1,2.3,1.5) circle (1.5pt);
\draw[fill=blue, blue] (4.2,2.3,1.5) circle (1.5pt);

\draw[blue, thick] (0,0.5,0.5)--(7,0.5,0.5);
\draw[fill=blue, blue] (0,0.5,0.5) circle (1.5pt);
\draw[fill=blue, blue] (2.1,0.5,0.5) circle (1.5pt);
\draw[fill=blue, blue] (4.2,0.5,0.5) circle (1.5pt);
\draw[fill=blue, blue] (6.3,0.5,0.5) circle (1.5pt);

\foreach \coo in {1,2,3,4}
	{\draw[dotted] (0,\coo,0)--(0,\coo,4);
	\draw[dotted] (0,0,\coo)--(0,4,\coo);
	}
	
\begin{scope}[xshift=60]	
\foreach \coo in {0,1,2,3,4}
	{\draw[dotted] (0,\coo,0)--(0,\coo,4);
	\draw[dotted] (0,0,\coo)--(0,4,\coo);
	}
\end{scope}

\begin{scope}[xshift=120]	
\foreach \coo in {0,1,2,3,4}
	{\draw[dotted] (0,\coo,0)--(0,\coo,4);
	\draw[dotted] (0,0,\coo)--(0,4,\coo);
	}
\end{scope}

\begin{scope}[xshift=180]	
\foreach \coo in {0,1,2,3,4}
	{\draw[dotted] (0,\coo,0)--(0,\coo,4);
	\draw[dotted] (0,0,\coo)--(0,4,\coo);
	}
\end{scope}

\begin{scope}[xshift=300]
\draw[-stealth] (xyz cs:x=0) -- (xyz cs:x=6) node[right] {\scriptsize$t$};
\draw[-stealth] (xyz cs:y=0) -- (xyz cs:y=4.5) node[right] {\scriptsize$2j$};
\draw[-stealth] (xyz cs:z=0) -- (xyz cs:z=4.5) node[left] {\scriptsize$-i$};

\node[below] at (2.2,0,0) {\scriptsize$2$};
\node[below] at (4.4,0,0) {\scriptsize$4$};

\draw[red, thick] (0,3.4,2.5)--(4.2,3.4,2.5); 
\draw[fill=red, red] (0,3.4,2.5) circle (1.5pt); 
\draw[fill=red, red] (2.1,3.4,2.5) circle (1.5pt);

\draw[red, thick] (0,3.6,2.5)--(2.1,3.6,2.5);
\draw[fill=red, red] (0,3.6,2.5) circle (1.5pt); 

\draw[red, thick] (0,2.7,1.5)--(2.1,2.7,1.5);
\draw[fill=red, red] (0,2.7,1.5) circle (1.5pt);

\draw[red, thick] (0,2.5,1.5)--(4.2,2.5,1.5);
\draw[fill=red, red] (0,2.5,1.5) circle (1.5pt);
\draw[fill=red, red] (2.1,2.5,1.5) circle (1.5pt);

\draw[red, thick] (0,2.3,1.5)--(4.2,2.3,1.5);
\draw[fill=red, red] (0,2.3,1.5) circle (1.5pt);
\draw[fill=red, red] (2.1,2.3,1.5) circle (1.5pt);

\draw[red, thick] (0,0.5,0.5)--(6,0.5,0.5);
\draw[fill=red, red] (0,0.5,0.5) circle (1.5pt);
\draw[fill=red, red] (2.1,0.5,0.5) circle (1.5pt);
\draw[fill=red, red] (4.2,0.5,0.5) circle (1.5pt);

\foreach \coo in {1,2,3,4}
	{\draw[dotted] (0,\coo,0)--(0,\coo,4);
	\draw[dotted] (0,0,\coo)--(0,4,\coo);
	}
	
\begin{scope}[xshift=60]	
\foreach \coo in {0,1,2,3,4}
	{\draw[dotted] (0,\coo,0)--(0,\coo,4);
	\draw[dotted] (0,0,\coo)--(0,4,\coo);
	}
\end{scope}

\begin{scope}[xshift=120]	
\foreach \coo in {0,1,2,3,4}
	{\draw[dotted] (0,\coo,0)--(0,\coo,4);
	\draw[dotted] (0,0,\coo)--(0,4,\coo);
	}
\end{scope}

\end{scope}
\end{tikzpicture}
\caption{The bigraded barcodes for 3 points in $\RR^2$.}
\label{fig_big_barcode_for_3_pts}
\end{figure}

\end{example}

\subsection{Double cohomology of moment-angle complexes}\label{subsec_hhzk}
In~\cite{l-p-s-s}, the homology of the moment-angle complex $H_*(\mathcal Z_K)=\bigoplus_{I\subset[m]}\widetilde H_*(K_I)$ was endowed with the second differential~$\partial'$. The 
homology of the resulting chain complex $\CH_*(\zk)=(H_*(\zk),\partial')$ was called the double homology of~$\zk$. We use this construction to define a new bigraded persistence module $\mathcal{PHHZ}(X)$.

Given $j\in[m]\setminus I$, consider the homomorphism
\[
  \phi_{p;I,j}\colon\widetilde H_p(K_I)\to \widetilde H_p(K_{I\sqcup\{j\}})
\]
induced by the inclusion $K_I\hookrightarrow K_{I\cup\{j\}}$. Then define
\[
  \partial'_p=(-1)^{p+1}\bigoplus_{I\subset[m],\,j\in[m]\setminus I}\varepsilon(j,I)\,\phi_{p;I,j},
\]
where 
\[
  \varepsilon(j,I)=(-1)^{\#\{i\in I\colon i<j\}}.
\]

The homomorphism $\partial'_p\colon\bigoplus_{I\subset[m]}\widetilde H_p(K_I)\to\bigoplus_{I\subset[m]}\widetilde H_p(K_I)$ satisfies $(\partial'_p)^2=0$. 
We therefore have a chain complex 
\begin{equation}\label{CH}
  \CH_\ast(\zk)\colonequals (H_\ast(\zk), \partial'),\quad
  \partial' \colon \widetilde{H}_{-i, 2j}(\zk) \to \widetilde{H}_{-i-1, 2j+2}(\zk),
\end{equation}
and bigraded \emph{double homology} of $\zk$:
\[
  \HH_\ast(\zk)=H(H_\ast(\zk), \partial').
\]

\emph{Double cohomology} of $\zk$ is defined similarly:
\[
  \HH^\ast(\zk)=H(H^\ast(\zk), d'),\quad
  d'\colon \widetilde{H}^{-i, 2j}(\zk) \to \widetilde{H}^{-i+1, 2j-2}(\zk)
\]
Double cohomology $\HH^\ast(\zk)$ can also be defined as the first double cohomology of the bicomplex
$\bigl(\Lambda[u_1,\ldots,u_m]\otimes
\mathbf k[K],d,d'\bigr)$ with $d=\sum_{i=1}^mv_i\frac\partial{\partial u_i}$, $d'=\sum_{i=1}^m\frac\partial{\partial u_i}$ and $dd'=-d'd$:
\[
  \HH^*(\zk)\cong H\bigl(H\bigl(\Lambda[u_1,\ldots,u_m]\otimes
  \mathbf k[K],d\bigr),d'\bigr),
\]
see~\cite[Theorem~4.3]{l-p-s-s}.

If $K=\Delta^{m-1}$, the full simplex on~$[m]$, then double homology is trivial:
\[
  \HH_*(\zk)=\mathbf k_{(0,0)}
\]
(the latter means that $\HH_{0,0}(\zk)\cong\mathbf k$ and $\HH_{-i,2j}(\zk)=0$ if $(-i,2j)\ne(0,0)$).

An important property of double homology is that attaching a simplex to $K$ along a face (in particular, adding a disjoint simplex) destroys most of $\HH_*(\zk)$, as described next:

\begin{theorem}[{\cite[Theorem~6.7]{l-p-s-s}}]\label{surgery}
Let $K=K' \cup_I  \Delta^n$ be a simplicial complex obtained from a nonempty simplicial complex $K'$ by gluing an $n$-simplex, $n\ge0$, along a proper, possibly empty, face $I \in K'$. Then either $K$ is a simplex, or 
\[
  \HH_*(\zk)=\mathbf k_{(0,0)}\oplus\mathbf k_{(-1,4)}.
\]
\end{theorem}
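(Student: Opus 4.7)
The plan is to compute $\HH_\ast(\zk)$ directly via a spectral sequence built from the Hochster decomposition of $\CH_\ast(\zk)$. Write $[m] = U \sqcup V$, where $U = \mathrm{vert}(K')$ and $V = \mathrm{vert}(\Delta^n) \setminus I$, so that each $J \subset [m]$ decomposes uniquely as $J = J' \sqcup W$ with $J' \subset U$, $W \subset V$. The first step is a Mayer--Vietoris analysis of $K_J = K'_{J'} \cup \tau$, where $\tau$ is the simplex on $(J' \cap I) \cup W$ and the intersection $K'_{J'} \cap \tau$ is the simplex on $J' \cap I$. Four cases arise: (i) $W = \varnothing$: $\widetilde H_\ast(K_J) = \widetilde H_\ast(K'_{J'})$; (ii) $W \neq \varnothing$, $J' \cap I \neq \varnothing$: both $\tau$ and the intersection are contractible, so $\widetilde H_\ast(K_J) \cong \widetilde H_\ast(K'_{J'})$ via the inclusion $K'_{J'} \hookrightarrow K_J$; (iii) $W \neq \varnothing$, $J' \neq \varnothing$, $J' \cap I = \varnothing$: $K_J$ is a disjoint union, so $\widetilde H_\ast(K_J) \cong \widetilde H_\ast(K'_{J'}) \oplus \mathbf k$ with an extra $\mathbf k$ in degree $0$ spanned by the new component; (iv) $J' = \varnothing$, $W \neq \varnothing$: $K_J = \tau$ is contractible.

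Next I split $\partial' = \partial'_U + \partial'_V$ according to whether the added vertex lies in $U$ or in $V$, and filter $\CH_\ast(\zk)$ by the decreasing filtration $G^q = \bigoplus_{|J \cap U| \geq q}$. The filtration is finite, hence the induced spectral sequence converges to $\HH_\ast(\zk)$. On the associated graded $d_0 = \partial'_V$, and for each fixed $J'$ the complex $\mathcal C_{J'} = \bigoplus_W \widetilde H_\ast(K_{J' \cup W})$ splits compatibly: the $\widetilde H_\ast(K'_{J'})$-summands assemble into $\widetilde H_\ast(K'_{J'}) \otimes C^\ast_{\mathrm{aug}}(\Delta^{|V|-1})$, which is acyclic, and in case (iii) the extra $\mathbf k$-summands (present only for $W \neq \varnothing$) form the unaugmented cochain complex of $\Delta^{|V|-1}$, whose cohomology is a single $\mathbf k$ at cochain degree $0$, i.e.\ $|W| = 1$. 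Together with the $J' = \varnothing$ case, where $\widetilde H_{-1}(\varnothing) = \mathbf k$ sits at $W = \varnothing$ and $\partial'_V$ is zero, the $E_1$-page consists of a class at $(|J'|, |W|, p) = (0, 0, -1)$ and, for each nonempty $J' \subset U \setminus I$, a class at $(|J'|, 1, 0)$.

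On $E_1$, $d_1 = \partial'_U$: the $(0, 0, -1)$ class is a cycle since $\partial'_U$ sends $\widetilde H_{-1}(\varnothing)$ into $\widetilde H_{-1}$ of singletons, which vanish. On the remaining classes, adding any vertex $u \in I$ lands in case (ii), zero on $E_1$; hence $d_1$ reduces to the unaugmented simplicial cochain differential of the simplex on $U \setminus I$, whose cohomology is $\mathbf k$ concentrated at $|J'| = 1$. Thus $E_2$ consists of precisely two classes, at $(|J'|, |W|, p) = (0, 0, -1)$ and $(1, 1, 0)$, corresponding under the formula $(p - |J| + 1,\, 2|J|)$ to bidegrees $(0, 0)$ and $(-1, 4)$. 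Higher differentials $d_r$, $r \geq 2$, vanish by degree considerations: $d_r$ shifts $|W|$ by $1 - r$, and the only a~priori possible landing spot $(3, 0, 0)$ of $d_2$ from the second class is zero in $E_1$ because no case contributes at $|W| = 0$, $p = 0$. So the spectral sequence collapses, yielding the claim whenever $U \setminus I \neq \varnothing$. The excluded case $U = I$ forces $K' = \Delta^{|I|-1}$ (since $K'$ contains all faces of $I$ and has no further vertices), whence $K = \Delta^n$ is a simplex.

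The main obstacle is the $\partial'_V$ bookkeeping in case (iii): one must verify that across different nonempty $W$'s the transition maps restrict to the identity on the new-component $\mathbf k$-summand, so that these summands assemble into the unaugmented simplicial cochain complex of $\Delta^{|V|-1}$. Signs work out cleanly once one orders $U < V$ on $[m]$: then $\varepsilon(v, J' \cup W) = (-1)^{|J'|} \varepsilon(v, W)$ for $v \in V$, and $\varepsilon(u, J' \cup W) = \varepsilon(u, J')$ for $u \in U$, so that $\partial'_U$ and $\partial'_V$ agree with the respective simplicial coboundary differentials up to global signs depending only on $p$ and $|J'|$, which is precisely what makes the tensor-product identifications valid.
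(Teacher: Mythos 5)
This theorem is not proved in the present paper: it is imported verbatim from \cite[Theorem~6.7]{l-p-s-s}, so there is no in-paper proof to compare your argument against. Judged on its own terms, your proof is correct and self-contained. The case analysis of $\widetilde H_*(K_{J'\sqcup W})$ via Mayer--Vietoris is right (in case (iii) the hypothesis $J'\ne\varnothing$ plus the no-ghost-vertices convention guarantees $K'_{J'}\ne\varnothing$, so the disjoint union contributes exactly one extra $\mathbf k$ in degree $0$); the filtration by $|J\cap U|$ gives a bounded, hence convergent, spectral sequence; the key point that for every $J'$ the $\widetilde H_*(K'_{J'})$-summands assemble into $\widetilde H_*(K'_{J'})$ tensored with the \emph{augmented} cochain complex of the nonempty simplex on $V$ (acyclic because $I$ is a \emph{proper} face, so $V\ne\varnothing$) is exactly what kills all of the homology of $K'$; and the surviving quotient complexes correctly reduce $E_1$ to the two stated classes plus the unaugmented cochain complex of $\Delta_{U\setminus I}$, whose cohomology under $d_1$ sits at $|J'|=1$, giving bidegree $(-1,4)$. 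The degenerate case $U=I$, where $K'=\Delta_I$ and $K=\Delta^n$, is handled. Two minor points of presentation rather than substance: first, your justification that $d_2$ vanishes (``no case contributes at $|W|=0$, $p=0$'') should be read at the level of $E_1$ --- case (i) does contribute $\widetilde H_0(K'_{J'})$ there on $E_0$, but it lies in the acyclic subcomplex and is gone by $E_1$, which is what your own computation shows; second, the compatibility of the splitting in case (iii) with the transition maps (so that the ``new component'' classes really form a quotient subcomplex isomorphic to the unaugmented cochain complex) deserves the one-line verification that the simplex $\Delta_W$ remains a separate component after adding any $u\in U\setminus I$ or $v\in V$, which you essentially supply. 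With those glosses the argument is complete and would serve as a valid independent proof of the cited result.
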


We note that the above property for simplicial complexes obtained by attaching a simplex along a face is extended by Valenzuela Ruiz and Stanley \cite{SV-arXiv} to the family of simplicial complexes obtained by gluing two arbitrary simplicial complexes along a possibly empty proper face of each. Such a simplicial complex is called \emph{wedge-decomposible}. The readers are referred to \cite[Section 8.1]{l-p-s-s} and \cite[Theorem 5.4]{SV-arXiv} for further details.

\begin{definition}\label{def_outlier}
Let $(X, d_X)$ be a finite metric space. A point $x\in X$ is a \emph{strong outlier} if for any $y\ne x$ holds the inequality
\[
  d_X(x,y)\geq \max_{y'\neq x} d_X(y, y'). 
\]
\end{definition}

\begin{proposition}\label{HHout}
Suppose a finite metric space $(X, d_X)$ has a strong outlier. Let $D$ be the diameter of $(X, d_X)$, and let $R(X,t)$ be the Vietoris--Rips complex associated with $(X, d_X)$. Then, 
\[
\HH_\ast(\mathcal{Z}_{R(X,t)})=\begin{cases}
\mathbf k_{(0,0)} & t\geq D;\\
\mathbf k_{(0,0)}\oplus \mathbf k_{(-1,4)} & t<D
\end{cases}
\]
with all maps $\HH_\ast(\mathcal{Z}_{R(X,t)})\to \HH_\ast(\mathcal{Z}_{R(X,t')})$ being the identity if 
$D\leq t\leq t'$, and the projection if $t<D\leq t'$.
\end{proposition}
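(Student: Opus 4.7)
The plan is to realize $R(X,t)$ either as a full simplex on $X$ (when $t\ge D$) or as obtained from a smaller complex by attaching one simplex along a proper face (when $t<D$), so that the discussion preceding Theorem~\ref{surgery} and Theorem~\ref{surgery} itself can be applied directly.

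When $t\ge D$, every pair of points of $X$ lies at distance at most $t$, hence $R(X,t)$ is the full simplex on $X$, and the identification $\HH_\ast(\mathcal Z_{R(X,t)})=\mathbf k_{(0,0)}$ follows from the statement immediately preceding Theorem~\ref{surgery}.

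The heart of the argument is the case $t<D$. Setting $Y=X\setminus\{x\}$ and $N_t=\{y\in Y:d_X(x,y)\le t\}$, the strong outlier property yields, for every $y\in N_t$ and every $y'\in Y$ with $y'\neq y$, the bound $d_X(y,y')\le d_X(x,y)\le t$. From this I would extract three facts: $N_t$ is a simplex of $R(Y,t)$, the set $\{x\}\cup N_t$ is a simplex of $R(X,t)$, and every simplex of $R(X,t)$ containing $x$ has its remaining vertices in $N_t$. Combined, these give the decomposition
\begin{equation*}
  R(X,t)=R(Y,t)\cup_{N_t}\Delta,
\end{equation*}
in which $\Delta$ is the full simplex on $\{x\}\cup N_t$, glued to $R(Y,t)$ along its proper face $N_t$, with intersection $R(Y,t)\cap\Delta$ equal to the full simplex on $N_t$. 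Since $t<D$ forces the existence of $y_0\in Y$ with $d_X(x,y_0)=D>t$, the edge $\{x,y_0\}$ is missing from $R(X,t)$, so $R(X,t)$ is not itself a simplex. Theorem~\ref{surgery} applied with $K'=R(Y,t)$, $I=N_t$, and $n=|N_t|$ then yields $\HH_\ast(\mathcal Z_{R(X,t)})=\mathbf k_{(0,0)}\oplus\mathbf k_{(-1,4)}$.

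For the persistence structure, when $D\le t\le t'$ the two complexes coincide with the full simplex on $X$, so the inclusion induces the identity on $\mathbf k_{(0,0)}$. When $t<D\le t'$, the induced map $\mathbf k_{(0,0)}\oplus\mathbf k_{(-1,4)}\to\mathbf k_{(0,0)}$ is zero on the $(-1,4)$ summand for bidegree reasons, while on the $(0,0)$ summand it sends the canonical unit generator (coming from $\widetilde H_{-1}(\varnothing)$ in the Hochster decomposition) to itself, giving the claimed projection. I expect the main hurdle to be verifying the decomposition $R(X,t)=R(Y,t)\cup_{N_t}\Delta$ cleanly---in particular, pinning down the intersection---which however reduces to a short combinatorial check driven by the strong outlier inequality.
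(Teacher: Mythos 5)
Your proof is correct and takes essentially the same route as the paper's: realize $R(X,t)$ as $R(X\setminus\{x\},t)$ with the simplex on $\{x\}\cup N_t$ glued along its facet $N_t$, and then invoke Theorem~\ref{surgery}. You spell out the combinatorial verification of the decomposition and the behaviour of the persistence maps in more detail than the paper does, but the underlying argument is identical.
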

\begin{proof}
The definition of an outlier $x$ implies that the Vietoris--Rips complex $R(X,t)$ is obtained from the Vietoris--Rips complex $R(X\setminus x,t)$ by attaching a simplex (with vertices $x$ and $\{y\colon d(x,y)\le t\}$) along its facet (with vertices $\{y\colon d(x,y)\le t\}$). The complex $R(X,t)$ is a simplex for $t\ge D$, and $R(X,t)$ is a simplicial complex obtained from $R(X\setminus x, t)$ by gluing a simplex along a proper face for $t<D$. Then the result follows from Theorem~\ref{surgery}.
\end{proof}

We define the bigraded persistent double homology and the corresponding barcodes by analogy with Definition~\ref{bgph}:

\begin{definition}
Let $(X,d_X)$ be a finite pseudo-metric space and $\{R(X,t)\}_{t\geq 0}$ its associated Vietoris--Rips filtration.
The \emph{persistent double homology} module of bidegree $(-i,2j)$ is 
\[
  \mathcal{PHHZ}_{-i,2j}(X) \colon \RR_{\geq 0} \to \mathbf{k}\ts{-mod},\quad
  t\mapsto \HH_{-i,2j}(\mathcal Z_{R(X,t)}).
\]
It maps $t\in \RR_{\geq0}$ to the bigraded module  $\HH_{-i,2j}(\mathcal Z_{R(X,t)})$ and maps a morphism $t_1\leq t_2$ to the homomorphism $\HH_{-i,2j}(\mathcal Z_{R(X,t_1)})\to\HH_{-i,2j}(\mathcal Z_{R(X,t_2)})$ induced by the inclusion $\mathcal Z_{R(X,t_1)}\to \mathcal Z_{R(X,t_2)}$ (see~\cite[Proposition~4.8]{l-p-s-s}). We also define the bigraded persistent double homology module
\begin{equation}\label{eq_PHHZ}
 \mathcal{PHHZ}(X)=
 \bigoplus_{\substack{0\leq i\leq m-1; \\ 0\leq j \leq m}}\mathcal{PHHZ}_{-i,2j}(X).
\end{equation}

One can view the bigraded persistent homology module~\eqref{eq_PHZ} as a functor to differential bigraded $\mathbf{k}$-modules,
\[
  \mathcal{PHZ}(X)\colon\RR_{\geq0}\to
  \ts{dg}(\mathbf k\ts{-mod}), \quad
  t\mapsto (H_{*,*}(\mathcal Z_{R(X,t)}),\partial').
\]
Then we have
\begin{equation}\label{phhcomposite}
  \mathcal{PHHZ}(X)=\mathcal H\circ\mathcal{PHZ}(X),
\end{equation}
where $\mathcal H\colon\ts{dg}(\mathbf k\ts{-mod}) \to\mathbf k\ts{-mod}$ is the homology functor. This will be convenient when we compare interleaving distances.
We denote by $\BDB(X)$ the barcode corresponding to the bigraded persistence module $\mathcal{PHHZ}(X)$.
\end{definition}

\begin{example}
Suppose $(X,d_X)$ has a strong outlier as in Definition \ref{def_outlier}. Then $\BDB(X)$ consists of an infinite bar $[0,\infty)$ at bidegree $(0,0)$ and a bar $[0,D)$ at bidegree $(-1,4)$, by Proposition~\ref{HHout}.
\end{example}

\begin{example}\label{ex_wedge_of_squares}
Let $G$ be the graph described below. 
\[
\begin{tikzpicture}[scale=0.8]
    \draw[fill] (0,0) circle (2pt);
    \draw[fill] (2,0) circle (2pt);
    \draw[fill] (1,1) circle (2pt);
    \draw[fill] (1,-1) circle (2pt);
    \draw[fill] (-2,0) circle (2pt);
    \draw[fill] (-1,1) circle (2pt);
    \draw[fill] (-1,-1) circle (2pt);
    \draw[thick] (0,0)--(1,1)--(2,0)--(1,-1)--(-1,1)--(-2,0)--(-1,-1)--(0,0);
    \node[above] at (-1,1) {$2$};
    \node[left] at (-2,0) {$1$};
    \node[below] at (-1,-1) {$3$};
    \node[above] at (0,0) {$4$};
    \node[above] at (1,1) {$5$};
    \node[right] at (2,0) {$7$};
    \node[below] at (1,-1) {$6$};
\end{tikzpicture}
\]
The associated finite pseudo-metric space $(X,d_X)$ is the set of vertices of $G$ with the geodesic distance, namely the distance between two vertices is the number of edges in a shortest path. For a subset $A \subset [7]$, we denote by $\Delta_A$ the simplex over the vertex set  $A$. Then, the Vietoris--Rips simplicial complex $R(X, t)$ associated with $(X, d_X)$ is given by 
\[
R(X,t)=\begin{cases}
X & 0\leq t < 1;\\
G & 1\leq t < 2;\\
\Delta_{\{1,2,3,4\}}\cup_{\Delta_{\{2,3,4\}}} \Delta_{\{2,3,4,5,6\}} \cup_{\Delta_{\{4,5,6\}}} \Delta_{\{4,5,6,7\}} & 2\leq t<3; \\
\Delta_{\{1,2,3,4,5,6\}} \cup_{\Delta_{\{2,3,4,5,6\}}} \Delta_{\{2,3,4,5,6,7\}} & 3\leq t < 4;\\
\Delta_{[7]} & 4\leq t.
\end{cases}
\]
In this case, the first four simplicial complexes have double homology $\HH_\ast(\mathcal{Z}_{R(X,t)})$ isomorphic to $\mathbf{k}_{(0,0)}\oplus \mathbf{k}_{(-1,4)}$ by Theorem \ref{surgery} and the result of \cite[Theorem 5.4]{SV-arXiv}. Hence, $\mathbb{BB}(X)$ consists of an infinite bar $[0, \infty)$ at bidegree $(0,0)$ and a bar $[0,4)$ at bidegree $(-1,4)$. 
\end{example}

In Example \ref{ex_wedge_of_squares}, if we only consider the subset $S=\{1,2,3,4\}$ of $X$, then the corresponding Vietoris--Rips simplicial complex $R(S,t)$ is given by $S$ for $0\leq t <1$, the $4$-cycle for $1\leq t< 2$, and $\Delta_{[4]}$ for $2\leq t$. Hence, we get the more interesting bigraded barcode $\mathbb{BB}(S)$ with $[0, \infty)$ in bidegree $(0,0)$, $[0,2)\oplus [1,2)$ in bidegree $(-1,4)$ and $[1,2)$ in bidegree $(-2, 8)$; the readers are referred to Theorem \ref{surgery} and \cite[Section 7]{l-p-s-s}. Essentially after removing the points $\{5,6,7\}$, we can see the circle, but with all of $X$ we only see 
two points $\{1,7\}$ through $\PHHZ (X)$. Similarly if we have a strong outlier, we only see two points, but removing it could give interesting bigraded barcode of $\mathcal{PHHZ}$. Perhaps, the bigraded barcode of $\mathcal{PHHZ}$ is sensitive to detecting minimal subcomplexes that carry persistent homology classes.

The following example shows that the persistent double homology module might detect different local structure of a given filtration. 

\begin{example}
Consider two filtrations of graphs shown in Figure \ref{fig_two_filtrations}, which could arise from a single graph with different weights on edges. Starting with~5 discrete points, each of the second terms is homotopy equivalent to the disjoint union of a circle and a point. The third terms are homotopy equivalent to a wedge of two circles. Hence, the usual persistent homology module $\mathcal{PH}$ does not distinguish between these filtrations. However, the bigraded persistent double homology module $\mathcal{PHHZ}$ gives different signals in its bigraded barcode.

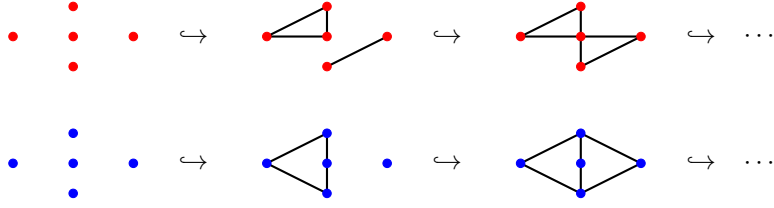
\begin{figure}
\begin{tikzpicture}[scale=0.8]
\draw[fill=red, red] (1,0) circle (2pt);
\draw[fill=red, red] (-1,0) circle (2pt);
\draw[fill=red, red] (0,1/2) circle (2pt);
\draw[fill=red, red] (0,-1/2) circle (2pt);
\draw[fill=red, red] (0,0) circle (2pt);
\node at (2,0) {$\hookrightarrow$};
\begin{scope}[xshift=120]
\draw[thick] (0,0)--(-1,0)--(0,1/2)--cycle;
\draw[thick] (0,-1/2)--(1,0);
\draw[fill=red, red] (1,0) circle (2pt);
\draw[fill=red, red] (-1,0) circle (2pt);
\draw[fill=red, red] (0,1/2) circle (2pt);
\draw[fill=red, red] (0,-1/2) circle (2pt);
\draw[fill=red, red] (0,0) circle (2pt);
\node at (2,0) {$\hookrightarrow$};
\end{scope}
\begin{scope}[xshift=240]
\draw[thick] (-1,0)--(0,1/2)--(0,0)--cycle;
\draw[thick] (0,0)--(1,0)--(0,-1/2)--cycle;
\draw[fill=red, red] (1,0) circle (2pt);
\draw[fill=red, red] (-1,0) circle (2pt);
\draw[fill=red, red] (0,1/2) circle (2pt);
\draw[fill=red, red] (0,-1/2) circle (2pt);
\draw[fill=red, red] (0,0) circle (2pt);
\node at (2,0) {$\hookrightarrow$};
\node at (3,0) {$\cdots$};
\end{scope}

\begin{scope}[yshift=-60]
\draw[fill=blue, blue] (1,0) circle (2pt);
\draw[fill=blue, blue] (-1,0) circle (2pt);
\draw[fill=blue, blue] (0,1/2) circle (2pt);
\draw[fill=blue, blue] (0,-1/2) circle (2pt);
\draw[fill=blue, blue] (0,0) circle (2pt);
\node at (2,0) {$\hookrightarrow$};
\begin{scope}[xshift=120]
\draw[thick] (-1,0)--(0,1/2)--(0,-1/2)--cycle;
\draw[fill=blue, blue] (1,0) circle (2pt);
\draw[fill=blue, blue] (-1,0) circle (2pt);
\draw[fill=blue, blue] (0,1/2) circle (2pt);
\draw[fill=blue, blue] (0,-1/2) circle (2pt);
\draw[fill=blue, blue] (0,0) circle (2pt);
\node at (2,0) {$\hookrightarrow$};
\end{scope}
\begin{scope}[xshift=240]
\draw[thick] (1,0)--(0,1/2)--(-1,0)--(0,-1/2)--cycle;
\draw[thick] (0,1/2)--(0,-1/2);
\draw[fill=blue, blue] (1,0) circle (2pt);
\draw[fill=blue, blue] (-1,0) circle (2pt);
\draw[fill=blue, blue] (0,1/2) circle (2pt);
\draw[fill=blue, blue] (0,-1/2) circle (2pt);
\draw[fill=blue, blue] (0,0) circle (2pt);
\node at (2,0) {$\hookrightarrow$};
\node at (3,0) {$\cdots$};
\end{scope}
\end{scope}
\end{tikzpicture}
\caption{Two filtrations of graphs.}
\label{fig_two_filtrations}
\end{figure}

Indeed, in the first filtration, each of the simplicial complexes has the double homology isomorphic to $\mathbf{k}_{(0,0)}\oplus \mathbf{k}_{(-1,4)}$ by \cite[Theorem 5.4]{SV-arXiv}; see also \cite[Example 8.7]{l-p-s-s} for a detailed calculation. Hence, the corresponding bigraded barcode begins with two bars and persists up to the parameter corresponding to the third term. 

In the second filtration, the second term has the double homology isomorphic to $\mathbf{k}_{(0,0)}\oplus \mathbf{k}_{(-1,4)}$ by Theorem \ref{surgery}. However, the third term, say $K$, is the join of two disjoint points and three disjoint points. We note that the double homology of disjoint points is isomorphic to $\mathbf{k}_{(0,0)}\oplus \mathbf{k}_{(-1,4)}$. Moreover, the double homology $\HH_\ast(\mathcal{Z}_{K_1\ast K_2})$ corresponding to the join $K_1\ast K_2$ of two simplicial complexes $K_1$ and $K_2$ is isomorphic to the tensor product $\HH_\ast(\mathcal{Z}_{K_1})\otimes \HH_\ast(\mathcal{Z}_{K_2})$. We refer to \cite[Theorem 6.3]{l-p-s-s}. Therefore, combining these two results, it follows that  $\HH_*(\zk)$ is isomorphic to $\mathbf{k}_{(0,0)}\oplus \mathbf{k}^2_{(-1,4)}\oplus \mathbf{k}_{(-2,8)}$.
Hence, the corresponding bigraded barcode begins with two bars and extra two bars are born at the parameter for the third term. 
\end{example}

\subsection{Generalizations of $\HH$}

We give a generalization of $\HH$ that can be applied to any functor out of the poset category of a simplex. For this subsection, we ignore degrees. 

The poset category $P([m])$ of the simplex~$\Delta^{m-1}$ has objects $I\subset[m]$ and morphisms $I\subset J$.
For an abelian category $\mathcal A$ and a functor $F\colon P([m])\rightarrow \mathcal A$, define $C(F)\colonequals \bigoplus_{I\subset[m]} F(I)$, equipped with a differential $\partial'$ as before. In other words, for $x\in F(I)$  and $\phi_{I,j}\colon I\hookrightarrow I\cup \{j\}$ being the inclusions,
\[
   \partial'(x)=\sum_{I,j} \varepsilon(j,I) F(\phi_{I,j})(x).
\]   
If $j$ is already in $I$, then the term is~$0$. We take $H(F)$ to be the homology of the chain complex $(C(F),\partial')$. Taking a simplicial complex $K$, if we let $F(I)=\widetilde H_\ast(K_I)$ we get that $H(F)=\HH_*(\mathcal Z_K)$.
We learned about this generalization from Daisuke Kishimoto
(see also \cite{Cha}), and we can make a cohomology functor for contravarient functors similarly. A simple way to get other functors is to use functors out of spaces other than reduced homology.  See \cite{CCC} for some other variants of $\HH$. 

We give two examples of this generalization.
First, let $X$ be a space and $K$ a simplicial complex. Consider the functor $G\colon P([m]) \to \ts{Top}$ defined by 
\[
G(I)=\mathop \mathit{Map}(K_I,X),\]
where $\mathit{Map}(K_I,X)$ denotes the mapping space from $K_I$ to $X$. Composing this with a functor into an abelian category such as the homotopy groups $\pi_*$, we can then take the double homology. 

For our second example, we consider a multidimensional persistence module as a multigraded module
$$M=\{M_v\}_{v\in \mathbb{Z}^m}$$  
over the multigraded ring $R=\mathbf{k}[t_1, \dots, t_m]$.
For $I\subset [m]$, we let $R_I$ be obtained from $R$ by inverting $t_i$ when $i\in I$.
Consider $M_I=R_I\otimes_R M$. 

Now we can define the functor by $F(I)=M_I$. These functors are studied in~\cite{FS}. We can identify the inclusion of $\HF_{0}(M)$ in $M$ as the torsion coming from the torsion subcategory generated by $R/(t_1,\ldots,  t_n)$. The objects of this category are $M$ such that for every $x\in M$ and $j\in [m]$ there is an $n$ so that $t_j^nx=0$. The higher homology classes in $\HF_i(M)$ give more mysterious invariants of multi-dimensional persistence modules. 

In this context stability relates to doubling a variable $t_i$ to $t_i, t_i'$. The corresponding $\HH$ will be modules over $R$ and $R[t_i']$ and to compare them we might use the map of rings $R[t_i']\rightarrow R$ sending $t_i'$ to $t_i$. This corresponds to reducing two highly correlated variables to a single variable and gives a pullback functor on modules. There will be a subcategory of $R$ modules which are stable under a comparison map between the modules.

\section{Stability}\label{sec_stability_HZK}
In this section, we establish stability properties of the bigraded persistence modules $\mathcal{PHZ}(X)$ and $\mathcal{PHHZ}(X)$; see  Theorems \ref{thm_BB_stable} and \ref{thm_BB_HH_stable}.

\subsection{Distance functions}
We begin by summarizing several distance functions defined on the categories of finite pseudo-metric spaces and persistence modules. 

\begin{definition}\label{def_GH_distance}
The \emph{Hausdorff distance} between two nonempty subsets $A$ and $B$ in a finite pseudo-metric space $(Z,d)$ is 
\[
  d_H(A,B)\vcentcolon= \max\bigl\{\sup_{a\in A}d(a,B),\,\sup_{b\in B}d(A,b)\bigr\}.
\]
The \emph{Gromov--Hausdorff distance} between two finite pseudo-metric spaces $X$ and $Y$ is 
\[
  \dgh(X,Y)\vcentcolon= \inf_{Z, f, g}
  d_H(f(X), g(Y)),
\]
where the infimum is taken over all isometric embeddings $f\colon X\to Z$ and $g \colon Y \to Z$ into a pseudo-metric space~$Z$.  
\end{definition}

Using the notion of a correspondence between two sets, one can give an alternative definition of the Gromov--Hausdorff distance, which is often convenient for computational purposes. 

\begin{definition}\label{def_correspondence}
Given two sets $X$ and $Y$ and a subset $C$ of $X\times Y$, define $D_{C}(x)$ as the number of $y\in Y$ for which $(x,y)\in C$. Similarly, define $D_C(y)$ as the number of $x\in X$ for which $(x,y)\in C$.
A \emph{correspondence} between two sets $X$ and $Y$ is a subset $C$ of $X\times Y$ such that both $D_{C}(x)$ and $D_C(y)$ are nonzero for all $x \in X$ and $y\in Y$.
\end{definition}

Note that the condition $D_{C}(x)\geq 1$ for all $x\in C$ is equivalent to saying that $C$ is a (multi-valued) mapping with domain $X$ and codomain $Y$. Furthermore, the condition $D_{C}(y)\geq 1$ is equivalent to saying that $C$ is surjective. 

\begin{proposition}[{\cite[Theorem 7.3.25]{BBI}}]
For two finite pseudo-metric spaces $(X, d_X)$ and $(Y, d_Y)$, 
\begin{equation}\label{eq_GH_distance_corresp}
\dgh(X,Y)=\frac{1}{2}\min\limits_{C}\max\limits_{(x_{1},y_{1}),(x_{2},y_{2})\in C}|d_{X}(x_{1},x_{2})-d_{Y}(y_{1},y_{2})|.
\end{equation}
\end{proposition}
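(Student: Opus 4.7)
The plan is to prove the stated equality by showing both inequalities between the Gromov--Hausdorff distance $\dgh(X,Y)$ and the quantity
\[
  \delta(X,Y)\colonequals \tfrac{1}{2}\min\limits_{C}\max\limits_{(x_{1},y_{1}),(x_{2},y_{2})\in C}|d_{X}(x_{1},x_{2})-d_{Y}(y_{1},y_{2})|,
\]
where the minimum is over correspondences $C\subset X\times Y$ in the sense of Definition~\ref{def_correspondence}. Because $X$ and $Y$ are finite, all ``inf'' and ``sup'' can be replaced by ``min'' and ``max'', so the extremal objects on both sides are attained and no compactness argument is needed.

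To establish $\delta(X,Y)\leq\dgh(X,Y)$, I would start from any isometric embeddings $f\colon X\to Z$ and $g\colon Y\to Z$ whose images have Hausdorff distance at most $r$, and construct a correspondence of distortion $\leq 2r$. The natural candidate is
\[
  C\colonequals \bigl\{(x,y)\in X\times Y\colon d_Z(f(x),g(y))\leq r\bigr\}.
\]
The Hausdorff distance condition guarantees that for each $x\in X$ there is some $y\in Y$ with $(x,y)\in C$, and symmetrically for each $y$, so $C$ is indeed a correspondence. For any $(x_1,y_1),(x_2,y_2)\in C$, the triangle inequality in $Z$ together with the isometry of $f$ and $g$ gives
\[
  |d_X(x_1,x_2)-d_Y(y_1,y_2)|
  =|d_Z(f(x_1),f(x_2))-d_Z(g(y_1),g(y_2))|
  \leq d_Z(f(x_1),g(y_1))+d_Z(f(x_2),g(y_2))\leq 2r.
\]
Taking the infimum over $(f,g,Z)$ yields $\delta(X,Y)\leq\dgh(X,Y)$.

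For the reverse inequality, I would fix a correspondence $C$ realizing $\delta(X,Y)$ with distortion $\varepsilon=2\delta(X,Y)$, and define a pseudo-metric $d$ on the disjoint union $Z=X\sqcup Y$ which restricts to $d_X$ on $X$, to $d_Y$ on $Y$, and on mixed pairs is
\[
  d(x,y)\colonequals \min\bigl\{d_X(x,x')+\tfrac{\varepsilon}{2}+d_Y(y',y)\colon (x',y')\in C\bigr\},\qquad x\in X,\ y\in Y.
\]
The symmetry and the identity on each factor are immediate. The key verification is the triangle inequality for mixed triples, which reduces via the definition of $C$ to an estimate of the form $|d_X(x'_1,x'_2)-d_Y(y'_1,y'_2)|\leq\varepsilon$ provided by the distortion bound on~$C$. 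Then, by construction, for every $x\in X$ there is $y\in Y$ with $d(x,y)\leq\varepsilon/2$ (take any $(x,y)\in C$), and symmetrically in the other direction, so the Hausdorff distance of $X$ and $Y$ in $Z$ is at most $\varepsilon/2=\delta(X,Y)$; hence $\dgh(X,Y)\leq\delta(X,Y)$.

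The main obstacle is the triangle inequality for the pseudo-metric $d$ built from $C$. The cases involving two points in $X$ and one in $Y$ (or symmetrically) are what force the precise form of the ``gluing gap'' $\varepsilon/2$: one has to check that for any $(x'_1,y'_1),(x'_2,y'_2)\in C$,
\[
  d_X(x_1,x'_1)+\tfrac{\varepsilon}{2}+d_Y(y'_1,y)\leq d_X(x_1,x_2)+d_X(x_2,x'_2)+\tfrac{\varepsilon}{2}+d_Y(y'_2,y),
\]
which by choosing matched witnesses reduces to $|d_X(x'_1,x'_2)-d_Y(y'_1,y'_2)|\leq\varepsilon$, i.e.\ the defining distortion bound for $C$. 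Once the pseudo-metric is in place, the Hausdorff distance bound is automatic and the two inequalities combine to yield the claimed formula.
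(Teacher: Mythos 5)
The paper gives no proof of this proposition; it is quoted directly from Burago--Burago--Ivanov, and your argument is precisely the standard proof of that theorem (correspondence from an $r$-close pair of embeddings in one direction; gluing $X\sqcup Y$ with gap $\varepsilon/2$ in the other), so it is correct and consistent with the cited source. One small remark: the displayed triangle inequality you single out as the crux ($d(x_1,y)\le d_X(x_1,x_2)+d(x_2,y)$) actually holds without the distortion bound, simply by reusing the witness pair $(x_2',y_2')$ for $x_1$; the case that genuinely requires $|d_X(x_1',x_2')-d_Y(y_1',y_2')|\le\varepsilon$ is the $X$--$Y$--$X$ configuration $d_X(x_1,x_2)\le d(x_1,y)+d(y,x_2)$, where the two $\varepsilon/2$ gaps combine with the distortion bound to recover $d_X(x_1',x_2')$. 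This does not affect the validity of the approach.
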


We have the following modification of the Gromov--Hausdorff distance~\eqref{eq_GH_distance_corresp}, to be used below.

\begin{definition}\label{d'gh}
For two finite pseudo-metric spaces $(X, d_X)$ and $(Y, d_Y)$ of the same cardinality, define $\dpgh(X,Y)$ by formula~\eqref{eq_GH_distance_corresp} in which the infimum is taken over bijective correspondences $C$ only.
\end{definition}

Obviously, $\dpgh(X,Y)\geq \dgh(X,Y)$ for any pair of pseudo-metric spaces of the same cardinality. The next example shows that for two pseudo-metric spaces $X$ and $Y$ of the same cardinality, the difference between $\dgh(X,Y)$ and $\dpgh(X,Y)$ can be arbitrarily large.

\begin{example}
Fix $n>2$. Let $X$ consist of four vertices of a rectangle in $\mathbb R^2$ with two edges of length~$1$ and diagonals of length~$n$. Let $Y$ consist of four vertices of a tetrahedron in $\mathbb R^3$ with the base an equilateral triangle with edges of length~1 and the other three edges of length~$n$. See Figure~\ref{fig_dgh_neq_d_prime_GH}. It is easy to see that $\dpgh(X,Y)=\frac{1}{2}(n-1)$, while $\dgh(X,Y)=\frac{1}{2}$.     
\end{example}

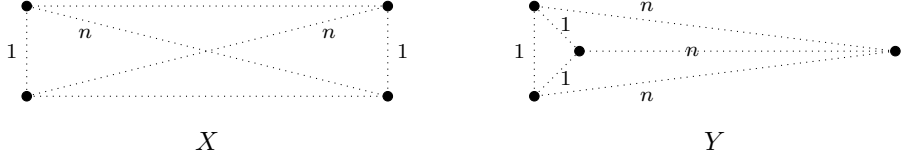
\begin{figure}
\begin{tikzpicture}[scale=0.6]
    \draw[dotted] (0,0)--(0,2)--(8,2)--(8,0)--cycle;
    \draw[dotted] (0,0)--(8,2);
    \draw[dotted] (0,2)--(8,0);
    \draw[fill] (0,2) circle (3pt);
    \draw[fill] (8,2) circle (3pt);
    \draw[fill] (8,0) circle (3pt);
    \draw[fill] (0,0) circle (3pt);
    \node[left] at (0,1) {\footnotesize$1$};
    \node[right] at (8,1) {\footnotesize$1$};
    \node[below] at (1.3,1.7) {\footnotesize$n$};
    \node[below] at (6.7,1.7) {\footnotesize$n$};
    \node at (4,-1) {$X$};
\begin{scope}[xshift=320]
    \draw[dotted] (0,0)--(1,1)--(0,2)--cycle;
    \draw[dotted] (8,1)--(0,0);
    \draw[dotted] (8,1)--(0,2);
    \draw[dotted] (8,1)--(1,1);

    \draw[fill] (0,0) circle (3pt);
    \draw[fill] (0,2) circle (3pt);
    \draw[fill] (1,1) circle (3pt);
    \draw[fill] (8,1) circle (3pt);

    \node[left] at (0,1) {\footnotesize$1$};
    \node at (0.7, 1.6) {\footnotesize$1$};
    \node at (0.7,0.4) {\footnotesize$1$};

    \node at (2.5,2) {\footnotesize$n$};
    \node at (3.5,1) {\footnotesize$n$};
    \node at (2.5,0) {\footnotesize$n$};

    \node at (4,-1) {$Y$};

\end{scope}
\end{tikzpicture}
\caption{Pseudo-metric spaces with $d_{GH}(X,Y)\neq d'_{GH}(X, Y).$}
\label{fig_dgh_neq_d_prime_GH}
\end{figure}

Next, we introduce a distance function for persistence modules. 

Given a persistence module $\mathcal{M}$ as in \eqref{eq_persistent_module} and an arbitrary $\epsilon\in \RR_{\geq 0}$, we define  
\[
\mathcal{M}^\epsilon\colon \RR_{\geq 0} \to \mathbf{k}\ts{-mod}
\]
by $\mathcal{M}^\epsilon(s)=\mathcal{M}(s+\epsilon)$ and $\mathcal{M}^\epsilon(\phi_{s_1,s_2})=
\phi_{s_1+\epsilon, s_2+\epsilon}$. There is a natural transformation 
\[
\eta^\epsilon_\mathcal{M} \colon \mathcal{M} \to \mathcal{M}^\epsilon
\]
defined by the family of morphisms $\{ M_{s} \to M_{s+\epsilon} \mid s\in \mathbb{R}_{\geq 0}\}$. 

Given two persistence modules $\mathcal{M}$ and $\mathcal{N}$, we say that $\mathcal{M}$ and $\mathcal{N}$ are \emph{$\epsilon$-interleaved} if there exist natural transformations  
\[
\beta \colon \mathcal{M} \to \mathcal{N}^\epsilon\quad \text{ and }\quad \gamma \colon \mathcal{N} \to \mathcal{M}^\epsilon
\]
such that $\beta^\epsilon \circ \gamma = \eta_\mathcal{N}^{2\epsilon}$ and $\gamma^\epsilon \circ \beta = \eta_\mathcal{M}^{2\epsilon}$, 
where $\beta^\epsilon \colon \mathcal{M}^\epsilon \to \mathcal{N}^{2\epsilon}$ and $\gamma^\epsilon \colon \mathcal{N}^\epsilon \to \mathcal{M}^{2\epsilon}$ denote the $\epsilon$-shifts of $\beta$ and $\gamma$, respectively.

\begin{definition}
The \emph{interleaving distance} between two persistence modules $\mathcal{M}$ and $\mathcal{N}$ is
\[
d_{\IL}(\mathcal{M}, \mathcal{N})\colonequals \inf \{\epsilon \in \RR_{\geq 0} \mid \mathcal{M} \text{ and } \mathcal{N} \text{ are $\epsilon$-interleaved}\}.
\]
\end{definition}

\begin{example}\label{ex-ild}
The interleaving distance between two interval modules $\mathbf{k}\left([a,b)\right)$ and $\mathbf{k}\left([a',b')\right)$ (see Example \ref{ex_interval_module}) is  given by 
\[
d_{\IL} \big(\mathbf{k}\left([a,b)\right), \mathbf{k}\left([a',b')\right) \big) = \min \left\{ \max \left\{ \frac{b-a}{2}, \frac{b'-a'}{2}\right\}, \max\{|a'-a|, |b'-b|\}\right\}.
\]
This formula has the following meaning. If the intervals are close to each other (the closure of each interval contains the midpoint of the other), then the distance is the maximum of the distances between their endpoints, i.\,e. $\max\{|a'-a|, |b'-b|\}$, which is the standard $l^\infty$-distance between the two intervals. If the intervals are far apart, then the distance is $\max \left\{ \frac{b-a}{2}, \frac{b'-a'}{2}\right\}$, where $\frac{b-a}{2}$ is the $l^\infty$-distance between $[a,b)$ and a `zero length interval' at the midpoint of~$[a,b)$.

The zero persistence module can be thought of as the interval module $\mathbf{k}(\varnothing)$ of an empty interval. In this case, the interleaving distance is given by
\[
  d_{\IL}\big(\mathbf{k}\left([a,b)\right), \mathbf{k}(\varnothing) \big) =\frac{b-a}2.
\]
\end{example}

Below, we record the following properties of $d_{\IL}$ for future use.

\begin{proposition}[{see~\cite[Proposition~5.5]{CSGO}}]
\label{oplus}
Let $\mathcal M_1$, $\mathcal M_2$, $\mathcal N_1$ and $\mathcal N_2$ be persistence modules. Then
\[
  d_{\IL}(\mathcal M_1\oplus\mathcal M_2,
  \mathcal N_1\oplus\mathcal N_2)\le
  \max\bigl(d_{\IL}(\mathcal M_1,\mathcal N_1), d_{\IL}(\mathcal M_2,\mathcal N_2)\bigr).
\]
\end{proposition}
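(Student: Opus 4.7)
The plan is to show that for every $\epsilon$ strictly larger than $\max(d_{\IL}(\mathcal M_1,\mathcal N_1),d_{\IL}(\mathcal M_2,\mathcal N_2))$, the direct sums $\mathcal M_1\oplus\mathcal M_2$ and $\mathcal N_1\oplus\mathcal N_2$ admit an $\epsilon$-interleaving; passing to the infimum over such $\epsilon$ then yields the claimed inequality.

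First I would record a monotonicity property of interleavings: if $\mathcal M$ and $\mathcal N$ are $\epsilon_0$-interleaved via $(\beta,\gamma)$ and $\epsilon_0\le\epsilon$, then they are also $\epsilon$-interleaved. Setting $\delta=\epsilon-\epsilon_0$ and post-composing $\beta$ with the structure natural transformation $\mathcal N^{\epsilon_0}\to\mathcal N^\epsilon$ produces a new natural transformation $\widetilde\beta\colon\mathcal M\to\mathcal N^\epsilon$, and analogously $\widetilde\gamma\colon\mathcal N\to\mathcal M^\epsilon$. The required identities $\widetilde\beta^\epsilon\circ\widetilde\gamma=\eta^{2\epsilon}_{\mathcal N}$ and $\widetilde\gamma^\epsilon\circ\widetilde\beta=\eta^{2\epsilon}_{\mathcal M}$ then follow from naturality of the structure maps of $\mathcal M$ and $\mathcal N$ combined with the original $\epsilon_0$-interleaving relations.

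Next, fix $\epsilon>\max(d_{\IL}(\mathcal M_1,\mathcal N_1),d_{\IL}(\mathcal M_2,\mathcal N_2))$ and, by the above monotonicity, choose $\epsilon$-interleavings $(\beta_i,\gamma_i)$ between $\mathcal M_i$ and $\mathcal N_i$ for $i=1,2$. Since the shift functor $(-)^\epsilon$ commutes with direct sums (at every $s\in\RR_{\ge0}$ one has $(\mathcal M_1\oplus\mathcal M_2)(s+\epsilon)=\mathcal M_1^\epsilon(s)\oplus\mathcal M_2^\epsilon(s)$), the componentwise maps $\beta\colonequals\beta_1\oplus\beta_2$ and $\gamma\colonequals\gamma_1\oplus\gamma_2$ are natural transformations
\[
  \mathcal M_1\oplus\mathcal M_2\to(\mathcal N_1\oplus\mathcal N_2)^\epsilon
  \qquad\text{and}\qquad
  \mathcal N_1\oplus\mathcal N_2\to(\mathcal M_1\oplus\mathcal M_2)^\epsilon.
\]
The interleaving identities $\beta^\epsilon\circ\gamma=\eta^{2\epsilon}_{\mathcal N_1\oplus\mathcal N_2}$ and $\gamma^\epsilon\circ\beta=\eta^{2\epsilon}_{\mathcal M_1\oplus\mathcal M_2}$ are then verified summand by summand from the corresponding identities for $(\beta_i,\gamma_i)$. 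Letting $\epsilon$ decrease to $\max(d_{\IL}(\mathcal M_1,\mathcal N_1),d_{\IL}(\mathcal M_2,\mathcal N_2))$ completes the proof.

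The proof is essentially routine and I do not anticipate a genuine obstacle; the only subtlety is that the individual interleaving distances need not be attained by actual interleavings (only approached by infima), which is precisely why one works with strict inequality $\epsilon>\max(\ldots)$ and passes to the limit, rather than directly combining ``optimal'' interleavings.
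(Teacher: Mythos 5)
Your proof is correct and is exactly the standard argument behind the cited result (the paper itself gives no proof, deferring to~\cite[Proposition~5.5]{CSGO}): sum the two interleavings componentwise, note that the shift functor commutes with direct sums, and handle the non-attainment of the infima via the monotonicity of interleavings in~$\epsilon$. Nothing further is needed.
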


The interleaving distance can be defined for persistence modules $\mathcal M\colon\RR_{\geq0}\to\ts{c}$ taking values in a category $\ts{c}$ more general than $\mathbf{k}\ts{-mod}$, see~\cite{b-s-s}. The following result is clear from the definition.

\begin{proposition}\label{prop_d_IL_composition}
Given two functors $\mathcal{M}$ and $\mathcal{N}$ from $\RR_{\geq 0}$ to $\ts{c}$, and a functor $\mathcal{F}\colon \ts{c} \to \ts{d}$, we have:
\begin{equation*}
d_{\IL}(\mathcal{F}\circ \mathcal{M}, \mathcal{F}\circ \mathcal{N}) \leq d_{\IL}(\mathcal{M}, \mathcal{N}).
\end{equation*}
\end{proposition}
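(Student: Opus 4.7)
The plan is to show that any $\epsilon$-interleaving of $\mathcal M$ and $\mathcal N$ is transported by the functor $\mathcal F$ to an $\epsilon$-interleaving of $\mathcal F\circ\mathcal M$ and $\mathcal F\circ\mathcal N$. Taking the infimum over all admissible $\epsilon$ then yields the desired inequality. Since there is no further structure on $\mathcal F$ beyond being a functor, the argument is essentially bookkeeping on the definition of interleaving.

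First I would observe that the shift operation $(-)^\epsilon$ commutes with post-composition by $\mathcal F$. Indeed, $(\mathcal F\circ\mathcal M)^\epsilon$ sends $s$ to $(\mathcal F\circ\mathcal M)(s+\epsilon)=\mathcal F(M_{s+\epsilon})$ and a morphism $s_1\le s_2$ to $\mathcal F(\phi_{s_1+\epsilon,s_2+\epsilon})$, which coincides with $\mathcal F\circ\mathcal M^\epsilon$. Likewise, the structure transformation $\eta^\epsilon_{\mathcal F\circ\mathcal M}$ has components $\mathcal F(\phi_{s,s+\epsilon})=\mathcal F(\eta^\epsilon_{\mathcal M})_s$, so $\eta^\epsilon_{\mathcal F\circ\mathcal M}=\mathcal F(\eta^\epsilon_{\mathcal M})$, and similarly for $\mathcal N$.

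Next I would use these identifications as follows. Fix $\epsilon>d_{\IL}(\mathcal M,\mathcal N)$ and choose natural transformations $\beta\colon\mathcal M\to\mathcal N^\epsilon$ and $\gamma\colon\mathcal N\to\mathcal M^\epsilon$ witnessing an $\epsilon$-interleaving, so $\beta^\epsilon\circ\gamma=\eta^{2\epsilon}_{\mathcal N}$ and $\gamma^\epsilon\circ\beta=\eta^{2\epsilon}_{\mathcal M}$. Define
\[
\mathcal F\beta\colon\mathcal F\circ\mathcal M\to\mathcal F\circ\mathcal N^\epsilon=(\mathcal F\circ\mathcal N)^\epsilon,\qquad
\mathcal F\gamma\colon\mathcal F\circ\mathcal N\to(\mathcal F\circ\mathcal M)^\epsilon,
\]
which are natural transformations since $\mathcal F$ preserves naturality. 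Because $\mathcal F$ preserves composition of natural transformations and shifts commute with $\mathcal F$, we get $(\mathcal F\beta)^\epsilon\circ\mathcal F\gamma=\mathcal F(\beta^\epsilon\circ\gamma)=\mathcal F(\eta^{2\epsilon}_{\mathcal N})=\eta^{2\epsilon}_{\mathcal F\circ\mathcal N}$, and symmetrically for the other composition. Hence $\mathcal F\circ\mathcal M$ and $\mathcal F\circ\mathcal N$ are $\epsilon$-interleaved, so $d_{\IL}(\mathcal F\circ\mathcal M,\mathcal F\circ\mathcal N)\le\epsilon$. Letting $\epsilon\downarrow d_{\IL}(\mathcal M,\mathcal N)$ completes the proof.

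There is no real obstacle here; the only care needed is to verify that each of the bookkeeping identities $(\mathcal F\circ\mathcal M)^\epsilon=\mathcal F\circ\mathcal M^\epsilon$, $\mathcal F(\eta^\epsilon_{\mathcal M})=\eta^\epsilon_{\mathcal F\circ\mathcal M}$ and $\mathcal F(\beta^\epsilon)=(\mathcal F\beta)^\epsilon$ is an equality and not merely a natural isomorphism, which is transparent from the definitions since the shift on $\RR_{\geq 0}$ acts by pre-composition.
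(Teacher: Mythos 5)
Your proof is correct, and it is exactly the definition-chase the paper has in mind: the paper gives no proof at all, stating only that the result "is clear from the definition." Your verification that post-composition by $\mathcal F$ commutes strictly with the shift and with the structure transformations $\eta^\epsilon$, and hence carries an $\epsilon$-interleaving to an $\epsilon$-interleaving, is the intended argument.
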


Next, we introduce the bottleneck distance between multisets of intervals (or barcodes). It is defined in the standard way as the infimum over matchings between them, in which some of the intervals can be matched to zero length intervals at their midpoints.

Let $B$ and $B'$ be finite multisets of intervals of the form $[a,b)$ with $a\in\RR_{\ge0}$ and $b\in\RR_{\geq 0} \cup \{\infty\}$. Define the multiset $\overline{B}=B\cup\varnothing^{|B'|}$, obtained by adding to $B$ the multiset containing the empty interval $\varnothing$ with cardinality~$|B'|$. Similarly, define $\overline{B'}=B'\cup\varnothing^{|B|}$. Note that $\overline{B}$ and $\overline{B'}$ have the same cardinality. We define the distance function 
\[
\pi \colon \overline{B}\times \overline{B'} \to \RR_{\geq 0}\cup \{\infty\}
\]
by the formulae:
\begin{gather*}
  \pi\left( [a, b), [a', b')\right)= \max\{|a'-a|, |b'-b| \},\quad \pi([a,\infty),[a',\infty))=|a'-a|,\\
  \pi([a,b),\varnothing)=\frac{b-a}2,\quad
  \pi(\varnothing,[a',b'))=\frac{b'-a'}2,\quad
  \pi(\varnothing,\varnothing)=0,\\
  \pi([a,\infty),[a',b'))=\pi([a,b),[a',\infty))=
  \pi([a,\infty),\varnothing)=\pi(\varnothing,[a',\infty))=\infty
\end{gather*}
for finite $a,b,a',b'$. 

\begin{definition}\label{def_Wasserstein}
Denote by $\mathcal{D}\left(\overline{B}, \overline{B'}\right)$ the set of all bijections $\theta \colon \overline{B} \to  \overline{B'}$. 
Then the \emph{$\infty$-Wasserstein distance}, also called the \emph{bottleneck distance}, is defined as follows:
\[
  W_\infty(B, B')=\min_{\theta \in \mathcal{D}(\overline{B}, \overline{B'})} \max_{ I \in \overline{B}} \, \pi(I, \theta(I)).
\]
It can be shown~\cite[Definition~4.11]{BuSc} that
\[
  W_\infty(B, B')=\min_{\theta \in \mathcal{D}(\overline{B}, \overline{B'})} \max_{ I \in \overline{B}} \, d_{\IL}\bigl(\mathbf k(I),\mathbf 
  k( \theta(I))\bigr),
\]
where the interleaving distance between interval modules can be expressed via the $l^\infty$-distance as in Example~\ref{ex-ild}.
\end{definition}

\begin{example}\ \

\noindent\textbf{1.} Let $B=\{[0,5)\}$ and $B'=\{[1,7)\}$. Then $\overline B=\{[0,5),\varnothing\}$ and $\overline {B'}=\{[1,7),\varnothing\}$. There are two bijections between $\overline B$ and $\overline{B'}$:
\[
\theta_1\colon[0,5)\mapsto[1,7),\,\varnothing\mapsto\varnothing,\quad\text{and}\quad\theta_2\colon[0,5)\mapsto\varnothing,\,\varnothing\mapsto[1,7).
\]
The minimum of $\max_{ I \in \overline{B}} \, \pi(I, \theta(I))$ is achieved at $\theta_1$, which reflects the fact that the two intervals overlap sufficiently. We have $W_\infty(B, B')=\pi([0,5),[1,7))=7-5=2$.

\smallskip

\noindent\textbf{2.} Let $B=\{[0,5)\}$ and $B'=\{[3,9)\}$. These two intervals are `far apart', so the minimum of $\max_{ I \in \overline{B}} \, \pi(I, \theta(I))$ is achieved when both $[0,5)$ and $[3,9)$ are matched to an empty interval. We have $W_\infty(B, B')=\pi(\varnothing,[3,9))=3$.
\end{example}

In the literature, the following result is often referred to as the \emph{isometry theorem}, see \cite[Theorem 3.4]{Les}. We also refer to \cite[Theorem 4.16]{BuSc} and \cite[Theorem 5.14]{CSGO} for different versions of this result:

\begin{theorem}\label{prop_isometry}
Let $\mathcal{M}$ and $\mathcal{N}$ be persistence modules satisfying the hypothesis of~Theorem~\ref{prop_decomp_persitent_module}. Then, 
\[
d_{\IL}(\mathcal{M}, \mathcal{N})=W_\infty(B(\mathcal{M}), B(\mathcal{N})),
\]
where $B(\mathcal{M})$ and $B(\mathcal{N})$ are the barcodes corresponding to $\mathcal{M}$ and $\mathcal{N}$, respectively. 
\end{theorem}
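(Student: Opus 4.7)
The plan is to establish the \emph{Isometry Theorem} by proving the two inequalities $d_{\IL}(\mathcal{M},\mathcal{N}) \le W_\infty(B(\mathcal{M}),B(\mathcal{N}))$ and $d_{\IL}(\mathcal{M},\mathcal{N}) \ge W_\infty(B(\mathcal{M}),B(\mathcal{N}))$ separately. The first inequality is constructive and follows readily from the decomposition Theorem~\ref{prop_decomp_persitent_module} together with Proposition~\ref{oplus}. The second, often called the \emph{induced matching} inequality, is the substantive part.

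For the forward inequality, I would fix $\epsilon = W_\infty(B(\mathcal{M}),B(\mathcal{N}))$ and a minimising bijection $\theta\in\mathcal{D}(\overline{B(\mathcal{M})},\overline{B(\mathcal{N})})$. By Theorem~\ref{prop_decomp_persitent_module} one has $\mathcal{M}\cong\bigoplus_{I\in B(\mathcal{M})}\mathbf{k}(I)$ and similarly for $\mathcal{N}$. Appending zero summands $\mathbf{k}(\varnothing)$ to each side, which changes neither module up to isomorphism, one obtains decompositions indexed by the same set. By Example~\ref{ex-ild} the interleaving distance of a matched pair satisfies $d_{\IL}\bigl(\mathbf{k}(I),\mathbf{k}(\theta(I))\bigr)\le \pi(I,\theta(I))\le\epsilon$, and an iterated application of Proposition~\ref{oplus} yields an $\epsilon$-interleaving of $\mathcal{M}$ and $\mathcal{N}$.

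For the reverse inequality I would follow the induced matching strategy of Bauer--Lesnick. Given $\epsilon$-interleaving morphisms $\beta\colon\mathcal{M}\to\mathcal{N}^{\epsilon}$ and $\gamma\colon\mathcal{N}\to\mathcal{M}^{\epsilon}$, the key observation is that the ranks of the internal structure maps $\phi^{\mathcal{M}}_{s_1,s_2}\colon M_{s_1}\to M_{s_2}$ and $\phi^{\mathcal{N}}_{s_1,s_2}$ are controlled by one another via the interleaving relations $\gamma^{\epsilon}\circ\beta=\eta^{2\epsilon}_{\mathcal{M}}$ and $\beta^{\epsilon}\circ\gamma=\eta^{2\epsilon}_{\mathcal{N}}$. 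A M\"obius-type inversion expresses the multiplicity of each bar $[a,b)$ in $B(\mathcal{M})$ as an alternating sum of such ranks, and translating the interleaving inequalities through this formula produces a partial injection from the bars of $B(\mathcal{M})$ of length strictly greater than $2\epsilon$ to bars of $B(\mathcal{N})$ with both endpoints shifted by at most $\epsilon$. A symmetric argument using $\gamma$ yields a partial injection in the other direction, while the remaining short bars (of length $\le 2\epsilon$) are matched to $\varnothing$ at cost at most $\epsilon$. Schr\"oder--Bernstein-style amalgamation of these partial injections produces a bijection $\theta\in\mathcal{D}(\overline{B(\mathcal{M})},\overline{B(\mathcal{N})})$ with $\pi(I,\theta(I))\le\epsilon$ for every $I$, so $W_\infty(B(\mathcal{M}),B(\mathcal{N}))\le\epsilon$.

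The main obstacle will be the reverse direction: the rank inequalities determine a matching \emph{locally} for each interval endpoint, but showing that the local choices assemble into a single globally consistent bijection of $\overline{B(\mathcal{M})}$ and $\overline{B(\mathcal{N})}$ requires care. In particular one must handle bookkeeping for half-open, infinite and empty intervals, and rule out matching an infinite bar in one barcode to a finite bar in the other, which would incur infinite cost under the $\pi$ defined before Definition~\ref{def_Wasserstein}. The finite-dimensionality hypothesis in Theorem~\ref{prop_decomp_persitent_module} keeps the combinatorics finite at each parameter value, which is what lets the M\"obius inversion terminate and the induced matching be well defined.
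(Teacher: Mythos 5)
The first thing to note is that the paper does not prove this statement at all: it is quoted as the known Isometry Theorem with references to \cite{Les}, \cite{BuSc} and \cite{CSGO}, so there is no internal proof to compare against. Judged on its own terms, your forward inequality $d_{\IL}(\mathcal{M},\mathcal{N})\le W_\infty(B(\mathcal{M}),B(\mathcal{N}))$ is essentially complete and is the standard easy direction: pad with copies of $\mathbf{k}(\varnothing)$, use $d_{\IL}(\mathbf{k}(I),\mathbf{k}(\theta(I)))\le\pi(I,\theta(I))$ from Example~\ref{ex-ild}, and iterate Proposition~\ref{oplus} over the (finitely many) matched pairs. The only point worth flagging there is that the iteration needs the barcodes to be finite multisets, which is built into Definition~\ref{def_Wasserstein} but not into the hypothesis of Theorem~\ref{prop_decomp_persitent_module} itself.

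The reverse inequality, however, is where the entire content of the theorem lives, and your treatment of it is a pointer rather than a proof. You name two genuinely different known strategies as if they were one: the rank-function/M\"obius-inversion argument (the box and $k$-triangle lemmas of Cohen-Steiner--Edelsbrunner--Harer \cite{c-e-h07}, or the interpolation argument of \cite{CSGO}) and the induced-matching argument of Bauer--Lesnick, which does not proceed by M\"obius inversion but by factoring $\eta^{2\epsilon}_{\mathcal M}$ through $\mathcal N^{\epsilon}$ and composing the canonical matchings induced by an epimorphism followed by a monomorphism. The step you flag as ``the main obstacle'' --- assembling local rank inequalities into a single global bijection with $\pi(I,\theta(I))\le\epsilon$ for every $I$ --- is precisely the theorem; acknowledging the difficulty does not discharge it, and the Schr\"oder--Bernstein-style amalgamation you invoke does not by itself control the endpoints of the bars in the amalgamated matching. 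So as a self-contained argument the proposal has a genuine gap in the reverse direction; as a proof outline it correctly identifies the structure of the standard proofs that the paper cites.
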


The \emph{stability theorem} asserts that the persistent homology barcodes are stable under perturbations of the data sets in the Gromov--Hausdorff metric. It is a key result justifying the use of persistent homology in data science. We refer to~\cite[Theorem~3.1]{Ch} and~\cite[Lemma~4.3, Theorem~5.2]{c-d-o14} for the proof:

\begin{theorem}\label{stabph}
Let $(X,d_X)$ and $(Y,d_Y)$ be two finite pseudo-metric spaces, and let $B(X)$ and $B(Y)$ be the barcodes corresponding to the persistence modules $\PH(X)$ and $\PH(Y)$. Then,
\[
  d_{\IL}(\PH(X),\PH(Y))\le 2d_{GH}(X,Y),
\]
and when working over a field, 
\[
  W_\infty(B(X),B(Y))\le 2d_{GH}(X,Y).
\]
\end{theorem}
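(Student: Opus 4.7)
The plan is to reduce to the correspondence characterization of the Gromov--Hausdorff distance and then construct an explicit interleaving at the simplicial level, applying the reduced homology functor afterwards. Fix any $\eta > d_{GH}(X,Y)$. By \eqref{eq_GH_distance_corresp}, there exists a correspondence $C \subset X\times Y$ with distortion at most $2\eta$, i.e., $|d_X(x_1,x_2)-d_Y(y_1,y_2)|\le 2\eta$ for all $(x_1,y_1),(x_2,y_2)\in C$. Using the surjectivity of $C$ onto each factor (Definition \ref{def_correspondence}), I would choose set-theoretic maps $f\colon X\to Y$ and $g\colon Y\to X$ whose graphs lie in $C$. These maps automatically inherit the $2\eta$ distortion bound.

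Next I would promote $f$ and $g$ to simplicial maps of Vietoris--Rips filtrations. If $\sigma=\{x_0,\dots,x_k\}$ is a simplex of $R(X,t)$, then $d_Y(f(x_i),f(x_j))\le d_X(x_i,x_j)+2\eta\le t+2\eta$, so $f(\sigma)\in R(Y,t+2\eta)$, and $f$ induces a simplicial map $R(X,t)\to R(Y,t+2\eta)$ compatible with the filtration inclusions \eqref{eq_inclusion_Rips_complex}. Symmetrically for $g$. Applying reduced simplicial homology yields natural transformations $\beta\colon\PH(X)\to\PH(Y)^{2\eta}$ and $\gamma\colon\PH(Y)\to\PH(X)^{2\eta}$.

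The crux is to show $\gamma^{2\eta}\circ\beta=\eta^{4\eta}_{\PH(X)}$ and the symmetric identity. I would use simplicial contiguity: for any $\sigma=\{x_0,\dots,x_k\}\in R(X,t)$, the set $\sigma\cup g(f(\sigma))$ is a simplex of $R(X,t+4\eta)$. Distances within $\sigma$ are at most $t$; distances within $g(f(\sigma))$ are at most $t+4\eta$ by two applications of the distortion bound. The subtle mixed distances $d_X(x_i,g(f(x_j)))$ are handled by noting that both $(x_i,f(x_i))$ and $(g(f(x_j)),f(x_j))$ lie in $C$, so distortion gives $d_X(x_i,g(f(x_j)))\le d_Y(f(x_i),f(x_j))+2\eta\le d_X(x_i,x_j)+4\eta\le t+4\eta$. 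Consequently the inclusion $R(X,t)\hookrightarrow R(X,t+4\eta)$ and the composition $g\circ f$ factoring through $R(Y,t+2\eta)$ are contiguous simplicial maps, hence homotopic, hence agree on homology. This exhibits a $2\eta$-interleaving of $\PH(X)$ and $\PH(Y)$ for every $\eta>d_{GH}(X,Y)$, giving $d_{\IL}(\PH(X),\PH(Y))\le 2d_{GH}(X,Y)$. The bottleneck inequality then follows immediately from the Isometry Theorem (Theorem \ref{prop_isometry}), since $X$ finite ensures the modules are pointwise finite-dimensional.

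The main obstacle is the mixed pairwise estimate in the contiguity verification: the bound on $d_X(x_i,g(f(x_j)))$ genuinely uses the structure of $C$ as a correspondence containing the graphs of both $f$ and $g$, rather than just the distortion bounds on $f$ and $g$ in isolation. Everything else is a routine repackaging of the simplicial geometry into the language of persistence modules.
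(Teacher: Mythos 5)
Your proof is correct. The paper does not actually prove Theorem~\ref{stabph}; it only cites \cite{Ch} and \cite{c-d-o14}, and your argument is a faithful reconstruction of the standard proof in those references: extract single-valued maps $f$ and $g$ whose graphs lie in a correspondence of distortion at most $2\eta$, observe that they induce filtration-shifting simplicial maps of Vietoris--Rips complexes (using that these are clique complexes, so only pairwise distances need checking), and verify the interleaving identities by contiguity. The one genuinely delicate point --- bounding the mixed distances $d_X(x_i, g(f(x_j)))$ via the fact that both $(x_i,f(x_i))$ and $(g(f(x_j)),f(x_j))$ lie in the \emph{same} correspondence $C$ --- is exactly where the argument could have gone wrong, and you handle it correctly; the passage to the bottleneck distance via Theorem~\ref{prop_isometry} is also properly justified by finiteness of $X$ and $Y$.
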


\subsection{The doubling operation}
The following construction plays a key role in proving a stability property of persistent double homology.

\begin{definition}\label{def_doubling_metric_sp}
Given a pseudo-metric space $(X, d_X)$ and a point $x\in X$, we refer to the pseudo-metric space $X^{\prime}=X\sqcup\{x^{\prime}\}$ with  $d_{X^{\prime}}(x,x^{\prime})=0$ as the \emph{doubling} of $X$ at~$x$.

Given a simplicial complex $K$ on $[m]$ and a vertex $\{i\}$ of $K$, the \emph{doubling} of $K$ at~$\{i\}$ is the minimal simplicial complex $K'$ on the set $[m]\sqcup\{i'\}$ which contains~$K$ and all subsets $I\sqcup \{i'\}$, where $i\in I\in K$. There is a deformation retraction $K'\to K$ sending $\{i'\}$ to $\{i\}$ and identity on other vertices. See Figure~\ref{fig_doubl}.
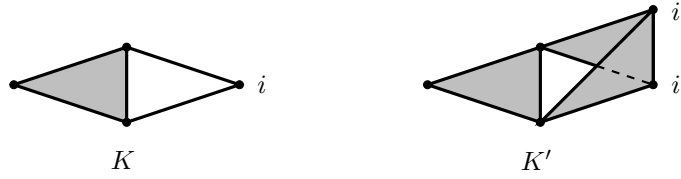
\begin{figure}[h]
\begin{tikzpicture}[scale=0.5]
\draw[very thick, fill=gray!50] (0,1)--(3,2)--(3,0)--cycle;
\draw[very thick] (3,2)--(6,1)--(3,0);
\draw[fill] (0,1) circle (3pt);
\draw[fill] (3,2) circle (3pt);
\draw[fill] (6,1) circle (3pt) 
node[anchor=west]{\ $i$};
\draw[fill] (3,0) circle (3pt);
\node at (2.9,-1) {$K$};
\draw[very thick, fill=gray!50] (11,1)--(14,2)--(14,0)--cycle;
\draw[very thick, fill=gray!50] (14,2)--(17,3)--(17,1)--cycle;
\draw[very thick, fill=gray!50] (14,0)--(17,3)--(17,1)--cycle;
\draw[thick, dashed] (15.5,1.5)--(17,1);
\draw[fill] (11,1) circle (3pt);
\draw[fill] (14,2) circle (3pt);
\draw[fill] (17,1) circle (3pt) 
node[anchor=west]{\ $i$};
\draw[fill] (17,3) circle (3pt) 
node[anchor=west]{\ $i'$};
\draw[fill] (14,0) circle (3pt);
\node at (13.9,-1) {$K'$};
\end{tikzpicture}
\caption{Doubling at a vertex.}
\label{fig_doubl}
\end{figure}

The doubling at a vertex is a particular case of the following operation introduced in~\cite{ab-pa19}.
Let $K$ be a simplicial complex on the set~$[m]$, and let $K_1, \ldots, K_m$ be simplicial complexes. The \emph{substitution} of $K_1, \ldots, K_m$ into $K$ is the simplicial complex
\[
  K(K_1, \ldots, K_m) = \{I_{j_1}\sqcup\dots\sqcup I_{j_k}\colon I_{j_l}\in K_{j_l},\; l = 1,\dots, k \quad\text{and}\quad \{j_1, \dots, j_k\}\in K\}.
\]
The doubling of $K$ at $\{i\}$ is the substitution complex $K(1,\ldots,\Delta[i,i'],\ldots,m)$, where~$\Delta[i,i']$ denotes the 1-simplex on $i$ and $i'$.

Observe that if $X'$ is the doubling of a pseudo-metric space $X$ at a point, then the Vietoris--Rips complex $R(X',t)$ is the doubling of $R(X,t)$ at a vertex, i.\,e. $R(X',t)=R(X,t)'$.
\end{definition}

\begin{proposition}\label{prop_d_GH_X_X'=0}
Suppose $\widehat{X}$ is obtained from $X$ by an arbitrary number of doubling operations performed at arbitrary points on the way. Then $d_{GH}(X,\widehat{X})=0$.
\end{proposition}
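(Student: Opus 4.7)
The plan is to verify the conclusion using the correspondence characterization of $d_{GH}$ given in Definition~\ref{def_correspondence} and the formula for $d_{GH}$ in terms of correspondences. Since $d_{GH}$ satisfies a triangle inequality, it suffices by induction to prove the claim for a single doubling: if $X' = X \sqcup \{x'\}$ is the doubling of $X$ at a point $x \in X$, then $d_{GH}(X, X') = 0$.

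For this base case, I would propose the correspondence
\[
C = \{(y,y) : y \in X\} \cup \{(x, x')\} \subset X \times X'.
\]
This is a genuine correspondence because every $y \in X$ appears as the first coordinate of $(y,y)$, and every point of $X'$ appears as a second coordinate (originals via $(y,y)$, and the new point $x'$ via $(x, x')$). The verification then amounts to showing that for any two pairs in $C$, the two intrinsic distances agree, so that the $\max$ in the correspondence formula for $d_{GH}$ equals $0$. The only nontrivial case is a pair of the form $\{(y,y),(x,x')\}$, where one must check $d_X(y, x) = d_{X'}(y, x')$. This follows from the pseudo-metric axioms: from $d_{X'}(x, x') = 0$ and the triangle inequality one obtains $d_{X'}(y, x') = d_{X'}(y, x) = d_X(y, x)$, as the inclusion $X \hookrightarrow X'$ is isometric.

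For the general case with several doublings, I would iterate. Concretely, let $\pi \colon \widehat{X} \to X$ be the retraction that sends each successively added duplicate back to its ancestor in $X$; by induction every point $\hat x \in \widehat X$ satisfies $d_{\widehat X}(\hat x, \pi(\hat x)) = 0$. The correspondence $C' = \{(\pi(\hat x), \hat x) : \hat x \in \widehat X\}$ then witnesses $d_{GH}(X, \widehat X) = 0$ because the triangle inequality, combined with $d_{\widehat X}(\hat x_i, \pi(\hat x_i)) = 0$, forces $d_{\widehat X}(\hat x_1, \hat x_2) = d_X(\pi(\hat x_1), \pi(\hat x_2))$ for any two pairs in $C'$.

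The main (mild) obstacle is bookkeeping: one must confirm that the pseudo-metric on each intermediate doubled space is consistently defined so that duplicated points remain at distance zero from their ancestors throughout, and that the isometric inclusion $X \hookrightarrow \widehat{X}$ persists after iterating. Both follow formally from the pseudo-metric axioms applied inductively; no analytic or combinatorial difficulty arises.
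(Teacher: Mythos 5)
Your proposal is correct and follows essentially the same route as the paper: reduce to a single doubling via the triangle inequality for $d_{GH}$, exhibit the correspondence $C=\{(y,y):y\in X\}\cup\{(x,x')\}$, and use the pseudo-metric triangle inequality together with $d_{X'}(x,x')=0$ to see that all matched distances agree, giving distance zero. The extra remarks about the retraction $\pi$ and the bookkeeping for iterated doublings are harmless elaborations of the same argument.
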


\begin{proof}
Using the triangle inequality, we reduce the claim to the case when $\widehat X$ is obtained from $X$ by a single doubling operation, i.\,e. $\widehat X=X'=X\sqcup \{x_0'\}$ is a doubling at $x_0\in X$. 

We use~\eqref{eq_GH_distance_corresp} and consider the following correspondence $C$ between $X$ and $X^{\prime}$:
\[
C \colonequals \{(x,x)\in  X\times X' \mid x\in X\}\sqcup \{(x_0,x_0')\}.
\]
Note that for any $s\in X$ with $ s\neq x_{0}$, the number $D_{C}(s)$ of $x
\in X'$ for which $(s,x)\in C$ is $1$. Similarly, $D_{C}(x_{0})=2$ and $D_{C}(t)=1$ for all $t\in X^{\prime}$. See Definition \ref{def_correspondence}. The triangle inequality together with $d_{X'}(x_0,x_0')=0$ implies $d_{X}(x_{1},x_{2})=d_{X^{\prime}}(y_{1},y_{2})$ for any $(x_{1},y_{1}),(x_{2},y_{2})\in C$, hence $d_{GH}(X,X^{\prime})=0$.
\end{proof}

\begin{proposition}\label{prop_dGH_d_prime_GH}
Given two finite pseudo-metric spaces $X$ and $Y$, 
there exist two finite pseudo-metric spaces $\widehat{X}$ and $\widehat{Y}$ such that $|\widehat X|=|\widehat Y|$ and 
\[
d_{GH}(X,Y)=d^{\prime}_{GH}(\widehat{X},\widehat{Y}),
\]
where $d_{GH}^{\prime}$ is the modified Gromov--Hausdorff distance, see Definition \ref{d'gh}.
\end{proposition}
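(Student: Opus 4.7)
The plan is to use an optimal correspondence realizing $d_{GH}(X,Y)$ and enlarge both $X$ and $Y$ by doubling so that this correspondence becomes bijective. Since $X$ and $Y$ are finite, the minimum in formula~\eqref{eq_GH_distance_corresp} is achieved by some correspondence $C\subset X\times Y$ with
\[
  2\,d_{GH}(X,Y) = \max_{(x_1,y_1),(x_2,y_2)\in C}|d_X(x_1,x_2)-d_Y(y_1,y_2)|.
\]
Let $N=|C|$. For each $x\in X$, the number of pairs in $C$ containing $x$ is $D_C(x)\ge 1$, and $\sum_{x\in X}D_C(x)=N$; similarly $\sum_{y\in Y}D_C(y)=N$.

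Construct $\widehat X$ from $X$ by doubling each point $x\in X$ a total of $D_C(x)-1$ times (iterating Definition~\ref{def_doubling_metric_sp}), so that $\widehat X$ contains $D_C(x)$ zero-distance copies of $x$, which we label by the pairs $(x,y)\in C$. Analogously, construct $\widehat Y$ by doubling each $y\in Y$ exactly $D_C(y)-1$ times and label the copies by the pairs $(x,y)\in C$. Then $|\widehat X|=N=|\widehat Y|$, and the assignment
\[
  \widehat C \colonequals \bigl\{(\hat x,\hat y)\in\widehat X\times\widehat Y
  \colon \hat x,\hat y \text{ are labelled by the same pair }(x,y)\in C\bigr\}
\]
is a bijective correspondence between $\widehat X$ and $\widehat Y$.

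Since each new copy is at distance zero from the original, the triangle inequality forces $d_{\widehat X}(\hat x,\hat x')=d_X(x,x')$ whenever $\hat x,\hat x'$ are copies of $x,x'$, and similarly for $\widehat Y$. Consequently
\[
  \max_{(\hat x_1,\hat y_1),(\hat x_2,\hat y_2)\in\widehat C}
  |d_{\widehat X}(\hat x_1,\hat x_2)-d_{\widehat Y}(\hat y_1,\hat y_2)|
  =\max_{(x_1,y_1),(x_2,y_2)\in C}|d_X(x_1,x_2)-d_Y(y_1,y_2)|
  =2\,d_{GH}(X,Y),
\]
so by Definition~\ref{d'gh}, $d'_{GH}(\widehat X,\widehat Y)\le d_{GH}(X,Y)$. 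Conversely, $d'_{GH}\ge d_{GH}$ always, and Proposition~\ref{prop_d_GH_X_X'=0} combined with the triangle inequality for $d_{GH}$ gives $d_{GH}(\widehat X,\widehat Y)=d_{GH}(X,Y)$; thus equality holds.

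The only subtle point is making the bookkeeping of the doubling consistent: we must use distinct labelled copies of a single point $x$ to pair against each of its partners under $C$, which is why we iterate the doubling $D_C(x)-1$ times rather than once. Once the labelling is set up, the distance-preserving property of doubling (distances inside $\widehat X$ depend only on the $X$-points, not on which copy) makes the verification immediate; this is the step I would write most carefully.
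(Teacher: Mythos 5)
Your proposal is correct and follows essentially the same strategy as the paper: take an optimal correspondence $C$, double each point of $X$ (resp.\ $Y$) exactly $D_C(x)-1$ (resp.\ $D_C(y)-1$) times so that $C$ lifts to a bijection $\widehat C$ preserving the multiset of distance discrepancies, then combine with $d'_{GH}\ge d_{GH}$ and Proposition~\ref{prop_d_GH_X_X'=0}. The only cosmetic difference is that you perform the doublings on both sides simultaneously (indexing copies by pairs of $C$), whereas the paper does it in two sequential stages via an intermediate correspondence $C'$; the resulting spaces and bijection are the same.
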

\begin{proof}
For two finite pseudo-metric spaces $S$ and $T$ of the same cardinality, we have 
\begin{equation}\label{eq_d_GH_leq_d'_GH}
d_{GH}(S,T)\leq d^{\prime}_{GH}(S,T),
\end{equation}
because any bijection is a correspondence between $S$ and~$T$. 

Now let $\widehat{X}$ and $\widehat{Y}$ be pseudo-metric spaces obtained by iterated doublings of $X$ and $Y$, respectively, so that $\widehat{X}$ and $\widehat{Y}$ have the same cardinality. The triangle inequality together with Proposition~\ref{prop_d_GH_X_X'=0} implies
\begin{align*}
\begin{split}
d_{GH}(X,Y)&\leq d_{GH}(X,\widehat{X})+d_{GH}(\widehat{X},\widehat{Y})+d_{GH}(\widehat{Y},Y) \\
&=d_{GH}(\widehat{X},\widehat{Y}) \leq d^{\prime}_{GH}(\widehat{X},\widehat{Y}),
\end{split}
\end{align*}
where the second equality follows from Proposition \ref{prop_d_GH_X_X'=0} and the last inequality follows from \eqref{eq_d_GH_leq_d'_GH}. 

To complete the proof, we construct iterated doublings $\widehat{X}$ and $\widehat{Y}$ such that $d'_{GH}(\widehat{X},\widehat{Y})\leq d_{GH}(X,Y)$.
Let $C$ be a correspondence which realizes $d_{GH}(X,Y)$. If $C$ is a bijection (in particular, $|X|=|Y|$), then there is nothing to prove. So we assume that $C$ is not a bijection. 

First, consider all the vertices $x\in X$ such that $D_{C}(x)>1$ and double each of them~$D_{C}(x)-1$ times. We get a new pseudo-metric space $\widehat{X}$ and a new correspondence~$C^{\prime}$ between $\widehat{X}$ and $Y$ which matches each double of $x$ with a single point of~$Y$. If~$C^{\prime}$ is a bijection, we set $\widehat{Y}=Y$ and we have 
\begin{align}\label{eq_d_GH_geq_d'_GH}
\begin{split}
d_{GH}(X,Y) &= \frac{1}{2} \max_{(x,y), (x', y')\in C} |  d_X(x,x')-d_Y(y,y') | \\
&= \frac{1}{2} \max_{(x,y), (x', y')\in C'} |  d_{\widehat{X}}(x,x')-d_{\widehat{Y}}(y,y') | \geq d'_{GH}(\widehat{X},\widehat{Y}),
\end{split}
\end{align}
where the second equality follows because the set 
$\{ |  d_X(x,x')-d_Y(y,y') |\} _{ (x,y), (x', y')\in C }$ coincides with 
$\{ |  d_{\widehat{X}}(x,x')-d_{\widehat{Y}}(y,y') | \}_{(x,y), (x', y')\in C'}$ and the last inequality follows from Definition \ref{d'gh}.

If $C'$ is not a bijection, then it is a single-valued surjective mapping from $\widehat{X}$ to~$Y$, which is not yet injective, so we perform the next step. Consider all the vertices $y\in Y$ such that $D_{C^{\prime}}(y)>1$ and double each of them $D_{C^{\prime}}(y)-1$ times. We get a new pseudo-metric space $\widehat{Y}$ and a new correspondence $\widehat{C}$ between $\widehat{X}$ and $\widehat{Y}$ which matches each double of $y$ with a single point of~$\widehat X$. Now, $\widehat{C}$ is an injective mapping from $\widehat{X}$ to $\widehat{Y}$, therefore it is a bijection. See Figure \ref{fig_bijection_C_hat} for an example of this procedure. Now the same argument as in \eqref{eq_d_GH_geq_d'_GH} establishes the claim. 
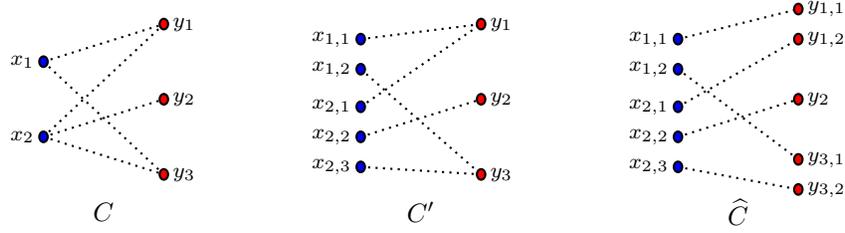
\begin{figure}
\begin{tikzpicture}[xscale=0.8]
\node[left] at (0,-0.5) {\footnotesize$x_2$};
\node[left] at (0,0.5) {\footnotesize$x_1$};
\coordinate (x2) at (0,-0.5); 
\coordinate (x1) at (0,0.5); 

\node[right] at (2,1) {\footnotesize$y_1$};
\node[right] at (2,0) {\footnotesize$y_2$};
\node[right] at (2,-1) {\footnotesize$y_3$};
\coordinate (y1) at (2,1); 
\coordinate (y2) at (2,0); 
\coordinate (y3) at (2,-1); 

\draw[dotted, thick] (x1)--(y3);
\draw[dotted, thick] (x1)--(y1);

\draw[dotted, thick] (x2)--(y1);
\draw[dotted, thick] (x2)--(y2);
\draw[dotted, thick] (x2)--(y3);

\draw[fill=blue, thick] (0,-0.5) circle (2pt);
\draw[fill=blue, thick] (0,0.5) circle (2pt);

\draw[fill=red, thick] (2,-1) circle (2pt);
\draw[fill=red, thick] (2,0) circle (2pt);
\draw[fill=red, thick] (2,1) circle (2pt);

\node at (1,-1.5) {$C$};

\begin{scope}[xshift=150]

\draw[dotted, thick] (0,0.8)--(2,1);
\draw[dotted, thick] (0,0.4)--(2,-1);

\draw[dotted, thick] (0,-0.1)--(2,1);
\draw[dotted, thick] (0,-0.5)--(2,0);
\draw[dotted, thick] (0,-0.9)--(2,-1);

\node[left] at (0,0.8) {\footnotesize$x_{1,1}$};
\node[left] at (0,0.4) {\footnotesize$x_{1,2}$};

\node[left] at (0,-0.1) {\footnotesize$x_{2,1}$};
\node[left] at (0,-0.5) {\footnotesize$x_{2,2}$};
\node[left] at (0,-0.9) {\footnotesize$x_{2,3}$};

\draw[fill=blue, thick] (0,0.8) circle (2pt);
\draw[fill=blue, thick] (0,0.4) circle (2pt);

\draw[fill=blue, thick] (0,-0.9) circle (2pt);
\draw[fill=blue, thick] (0,-0.5) circle (2pt);
\draw[fill=blue, thick] (0,-0.1) circle (2pt);

\node[right] at (2,1) {\footnotesize$y_1$};
\node[right] at (2,0) {\footnotesize$y_2$};
\node[right] at (2,-1) {\footnotesize$y_3$};

\draw[fill=red, thick] (2,-1) circle (2pt);
\draw[fill=red, thick] (2,0) circle (2pt);
\draw[fill=red, thick] (2,1) circle (2pt);

\node at (1,-1.5) {$C'$};
\end{scope}

\begin{scope}[xshift=300]
\draw[dotted, thick] (0,0.8)--(2,1.2);
\draw[dotted, thick] (0,0.4)--(2,-0.8);

\draw[dotted, thick] (0,-0.1)--(2,0.8);
\draw[dotted, thick] (0,-0.5)--(2,0);

\draw[dotted, thick] (0,-0.9)--(2,-1.2);

\node[left] at (0,0.8) {\footnotesize$x_{1,1}$};
\node[left] at (0,0.4) {\footnotesize$x_{1,2}$};

\node[left] at (0,-0.1) {\footnotesize$x_{2,1}$};
\node[left] at (0,-0.5) {\footnotesize$x_{2,2}$};
\node[left] at (0,-0.9) {\footnotesize$x_{2,3}$};

\draw[fill=blue, thick] (0,0.8) circle (2pt);
\draw[fill=blue, thick] (0,0.4) circle (2pt);

\draw[fill=blue, thick] (0,-0.9) circle (2pt);
\draw[fill=blue, thick] (0,-0.5) circle (2pt);
\draw[fill=blue, thick] (0,-0.1) circle (2pt);

\node[right] at (2,1.2) {\footnotesize$y_{1,1}$};
\node[right] at (2,0.8) {\footnotesize$y_{1,2}$};

\node[right] at (2,0) {\footnotesize$y_2$};

\node[right] at (2,-0.8) {\footnotesize$y_{3,1}$};
\node[right] at (2,-1.2) {\footnotesize$y_{3,2}$};

\draw[fill=red, thick] (2,1.2) circle (2pt);
\draw[fill=red, thick] (2,0.8) circle (2pt);

\draw[fill=red, thick] (2,0) circle (2pt);

\draw[fill=red, thick] (2,-.8) circle (2pt);
\draw[fill=red, thick] (2,-1.2) circle (2pt);

\node at (1,-1.5) {$\widehat C$};
\end{scope}
\end{tikzpicture}
\caption{From a correspondence to a bijection via iterated doublings.}
\label{fig_bijection_C_hat}
\end{figure}
\end{proof}

The proof of the last proposition also gives the following proposition. This could be helpful in proving other stability results. 

\begin{proposition}
Any correspondence between finite sets can be written as a composition of doublings and a single bijection. 
\end{proposition}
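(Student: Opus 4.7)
The construction needed is already present, essentially verbatim, in the proof of Proposition~\ref{prop_dGH_d_prime_GH}; the new content here is interpretive. My plan is to reinterpret that construction set-theoretically (discarding the metric) and cast it as an explicit factorization of the given correspondence.

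First I would fix terminology. A doubling of a set $X$ at a point $x$, producing $X' = X \sqcup \{x'\}$, determines an elementary correspondence $D_x \colonequals \{(z,z) : z \in X\} \cup \{(x, x')\} \subset X \times X'$. By ``composition of doublings and a single bijection'' I will mean the relational composite of a chain $X = X_0 \to X_1 \to \cdots \to \widehat{X}$ of elementary doubling correspondences, followed by a bijection $\widehat{X} \to \widehat{Y}$, followed by the reverses of a chain $Y = Y_0 \to Y_1 \to \cdots \to \widehat{Y}$ of elementary doublings.

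Given an arbitrary correspondence $C \subset X \times Y$, I would then apply two rounds of iterated doubling exactly as in the preceding proposition. In the first round, for each $x \in X$ with $D_C(x) > 1$, I perform $D_C(x) - 1$ successive doublings at $x$ to obtain $\widehat{X}$; choosing, for each such $x$, a bijection between the clones of $x$ in $\widehat{X}$ and the fibre $\{y : (x,y) \in C\}$ produces a lift $C' \colon \widehat{X} \to Y$ which is a single-valued surjection. In the second round, I double each $y \in Y$ with $|{C'}^{-1}(y)| > 1$ the appropriate number of times to obtain $\widehat{Y}$, and pair clones of $y$ with preimages under $C'$ to produce a bijection $\widehat{C} \colon \widehat{X} \to \widehat{Y}$.

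It will then remain to unwind definitions and check that the relational composite of the doubling correspondences $X \to \widehat{X}$, the bijection $\widehat{C}$, and the reverses of the doublings $\widehat{Y} \to Y$ recovers $C$ exactly. The substantive content is already in Proposition~\ref{prop_dGH_d_prime_GH}, so the main obstacle is simply making ``composition of doublings'' precise enough that the factorization statement is unambiguous and tracking which clone is identified with which along the way; once the elementary doubling correspondences are set up, the verification is mechanical.
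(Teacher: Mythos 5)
Your proposal is correct and follows essentially the same route as the paper, which simply observes that the two-round doubling construction from the proof of Proposition~\ref{prop_dGH_d_prime_GH} (first doubling points of $X$ to make the correspondence single-valued, then doubling points of $Y$ to make it injective) already yields the desired factorization. Your additional care in formalizing the elementary doubling correspondences and checking that the relational composite recovers $C$ is a reasonable elaboration of what the paper leaves implicit.
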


We note that the doubling of a simplicial complex does not preserve the homology of the corresponding moment-angle complexes, whereas it does preserve the double homology. See the example below and the subsequent proposition. 

\begin{example}\label{ex_double_two_points}
Let $K$ be a simplicial complex consisting of two discrete points. The corresponding moment-angle complex $\zk$ is 
\[
  D^2\times S^1 \cup_{S^1\times S^1} S^1\times D^2 \cong S^3. 
\]
Let $K'$ be the doubling of $K$ at a point of $K$. Then $K'$ is a disjoint union of a $0$-simplex and a $1$-simplex. Hence, the corresponding moment-angle complex $\mathcal{Z}_{K'}$ is 
\[
  D^2\times S^1 \times S^1 \cup_{S^1\times S^1\times S^1} S^1\times D^2\times D^2, 
\]
which can be shown to be homotopy equivalent to the wedge $S^3\vee S^3\vee S^4$. We see that $H_\ast(\zk)\neq H_\ast(\mathcal{Z}_{K'})$. 
\end{example}

\begin{proposition}\label{prop_doubling_HHZK}
Let $K'$ be obtained from $K$ by doubling a vertex. Then, 
\[
\HH_\ast(\mathcal{Z}_K) \cong \HH_\ast(\mathcal{Z}_{K'}).
\]
\end{proposition}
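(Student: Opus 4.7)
The plan is to exploit the Hochster decomposition of $H_*(\mathcal Z_{K'})$ and reduce the claim to showing the acyclicity of the subcomplex indexed by those subsets that contain the new vertex~$i'$.

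First, split $H_*(\mathcal Z_{K'}) = \bigoplus_{J' \subset [m]\sqcup\{i'\}} \widetilde H_*(K'_{J'})$ as a direct sum $A \oplus B$ of graded modules, where $A = \bigoplus_{J\subset[m]}\widetilde H_*(K'_J)$ and $B = \bigoplus_{J\subset[m]}\widetilde H_*(K'_{J\sqcup\{i'\}})$. For $J \subset [m]$ one has $K'_J = K_J$, so $A \cong H_*(\mathcal Z_K)$. Since the differential $\partial'$ only adjoins elements to the indexing subset, $B$ is a subcomplex of $\CH_*(\mathcal Z_{K'})$ and a routine check identifies the quotient chain complex with $\CH_*(\mathcal Z_K)$. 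The long exact sequence in homology therefore reduces the proposition to proving that $H(B)=0$.

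The crucial geometric input is the following lemma: for every $J \subset [m] \setminus \{i\}$, the inclusion
\[
K'_{J \sqcup \{i'\}} \hookrightarrow K'_{(J\cup\{i\})\sqcup\{i'\}}
\]
is a homotopy equivalence. To establish this I would show that the link of the vertex $i$ in $K'_{(J\cup\{i\})\sqcup\{i'\}}$ is a simplicial cone with apex $i'$: the defining rule that $S\sqcup\{i'\}\in K'$ iff $(S\setminus\{i'\})\cup\{i\}\in K$ directly implies that whenever $T\cup\{i\}$ is a simplex of $K'$, so is $T\cup\{i,i'\}$. A vertex with contractible link can be deleted without changing the homotopy type, which yields the claim.

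To exploit the lemma, decompose $B = B^1 \oplus B^2$ according to whether $i \notin J$ or $i \in J$, and correspondingly split the differential as $\partial'|_B = \partial'_{\neq i} + \partial'_i$, where $\partial'_{\neq i}$ preserves the splitting and $\partial'_i\colon B^1 \to B^2$ is the component corresponding to adjoining the vertex $i$. The key lemma identifies each summand of $\partial'_i$ with an isomorphism on $\widetilde H_*$; combined with the natural bijection $J \leftrightarrow J\cup\{i\}$ this makes $\partial'_i$ an isomorphism of graded modules, and being a chain map it is then an isomorphism of chain complexes $(B^1,\partial'_{\neq i}) \cong (B^2,\partial'_{\neq i})$. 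Consequently $(B,\partial'|_B)$ is the mapping cone of an isomorphism, hence acyclic, and the proposition follows. The main obstacle, unsurprisingly, lies in the key lemma: verifying that $i'$ really plays the role of a cone apex in the link of $i$ requires a careful unfolding of the combinatorial definition of the doubled complex~$K'$.
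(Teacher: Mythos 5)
Your proof is correct and follows essentially the same route as the paper's: both reduce the statement to the acyclicity of the summand $\bigoplus_{J\subset[m]}\widetilde H_*\bigl(K'_{J\sqcup\{i'\}}\bigr)$ of $\CH_*(\mathcal Z_{K'})$ (the paper realizes it as the kernel of a surjective chain map onto $\CH_*(\mathcal Z_K)$, you as a subcomplex with that quotient), and both rest on the key fact that $K'_{J\sqcup\{i'\}}\hookrightarrow K'_{J\sqcup\{i,i'\}}$ is a homotopy equivalence, after which the ``adjoin $i$'' component of $\partial'$ is an isomorphism on each summand. The remaining differences are cosmetic: you package the final acyclicity as the mapping cone of the isomorphism $\partial'_i$, whereas the paper filters by $|J\setminus\{i\}|$ and notes that the associated spectral sequence has vanishing $E^1$-page; you also sketch a link-is-a-cone justification for the key homotopy equivalence, which the paper asserts without proof.
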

\begin{proof}
Let $K'$ be obtained by doubling of $K$ at $i\in [m]$. We recall the chain complex~\eqref{CH} and define the chain map 
\begin{equation*}
  \varphi \colon \CH_\ast(\mathcal{Z}_{K'}) = \Bigl(\bigoplus_{I\subset [m]\sqcup \{i'\} }\widetilde{H}_\ast(K'_I),\partial'\Bigr) \to \CH_\ast(\mathcal{Z}_{K}) = \Bigl(\bigoplus_{J\subset [m]}
  \widetilde{H}_\ast(K_J),\partial'\Bigr) 
\end{equation*}
by the property that for $\alpha\in \widetilde{H}_\ast(K'_I)$, 
\[
  \varphi(\alpha)=\begin{cases} 0 & \text{if } i'\in I;\\
  \alpha &\text{if } i' \notin I.
  \end{cases}
\]

We claim that the chain map $\varphi$ defined above is a weak equivalence. Since $\varphi$ is surjective, it suffices to show that $\ker \varphi$ is acyclic. 
Observe that 
\[
  \ker \varphi \cong \bigoplus_{i'\in I \subset [m]\sqcup \{i'\}} \widetilde{H}_\ast\left(K'_I\right)=
  \bigoplus_{L\subset[m]}\widetilde H_*(K'_{L\sqcup\{i'\}}).
\]
Let 
$T_n \colonequals \{ L \subset [m] \colon |L\setminus \{i\} | \geq n \}$
and define a decreasing filtration 
\[
  \ker\varphi=F_0\supset F_1\supset\cdots\supset F_n\supset\cdots
\]
with
\[
  F_n = \bigoplus_{L\in T_n}\widetilde H_*(K'_{L\sqcup\{i'\}}),\quad
  F_n/F_{n+1}=\bigoplus_{L\subset[m],\,
  |L\setminus\{i\}|=n}\widetilde H_*(K'_{L\sqcup\{i'\}}).
\]
The associated spectral sequence has $E^0_n=(F_n/F_{n+1},\overline{\partial'})$, where the differential $\overline{\partial'}$ induced by $\partial'$, decomposes as a direct sum
\[
  \bigoplus_{i\notin L,\,|L|=n} \Bigl( \widetilde{H}_\ast(K'_{L\sqcup\{i'\}}) \to \widetilde{H}_\ast(K'_{L\sqcup \{i,i'\}})\Bigr). 
\]
Each map in the parentheses above is an isomorphism, because $K'_{L\sqcup\{i'\}}\subset K'_{L\sqcup\{i,i'\}}$ is a homotopy equivalence. Hence, the $E_1$-page of the spectral sequence is zero, so $\ker\varphi$ is acyclic. \end{proof}

\begin{proposition}\label{cor_Wasserstein_doubling}
Suppose $\widehat{X}$ is a pseudo-metric space obtained from $X$ by an arbitrary number of doubling operations performed at arbitrary points. Then the persistent double homology modules $\PHHZ(X)$ and $\PHHZ(\widehat X)$ are isomorphic.
\end{proposition}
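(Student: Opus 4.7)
The plan is to reduce to a single doubling operation and then upgrade the pointwise isomorphism from Proposition \ref{prop_doubling_HHZK} to a natural isomorphism of persistence modules. Since an isomorphism of persistence modules is preserved under composition, by induction on the number of doublings it suffices to treat the case $\widehat X = X' = X \sqcup \{x_0'\}$ obtained from $X$ by a single doubling at a point $x_0 \in X$. As already observed in Definition \ref{def_doubling_metric_sp}, for every $t \geq 0$ the Vietoris--Rips complex $R(X', t)$ coincides with the doubling of $R(X, t)$ at the vertex $x_0$.

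For each $t \geq 0$, Proposition \ref{prop_doubling_HHZK} supplies a chain map
\[
\varphi_t \colon \CH_*(\mathcal{Z}_{R(X', t)}) \to \CH_*(\mathcal{Z}_{R(X, t)})
\]
which is a quasi-isomorphism. The main task is to verify that these maps assemble into a morphism of persistence modules: for every $t_1 \leq t_2$ the diagram
\[
\begin{CD}
\HH_*(\mathcal{Z}_{R(X', t_1)}) @>>> \HH_*(\mathcal{Z}_{R(X', t_2)}) \\
@V\varphi_{t_1}VV @VV\varphi_{t_2}V \\
\HH_*(\mathcal{Z}_{R(X, t_1)}) @>>> \HH_*(\mathcal{Z}_{R(X, t_2)})
\end{CD}
\]
must commute, where the horizontal arrows are induced by the inclusions of Vietoris--Rips complexes. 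Once this is established, one obtains the desired isomorphism $\mathcal{PHHZ}(X') \cong \mathcal{PHHZ}(X)$.

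I view this naturality check as the main obstacle, though it should follow from compatibility with the Hochster decomposition. The module $H_*(\mathcal{Z}_{R(X', t)})$ splits as a direct sum $\bigoplus_I \widetilde H_*(R(X', t)_I)$ over subsets $I \subset X \sqcup \{x_0'\}$, and the inclusion-induced filtration maps respect this indexing by Proposition \ref{funin}. For $I$ with $x_0' \notin I$, the full subcomplex $R(X', t)_I$ coincides with $R(X, t)_I$, so $\varphi_t$ acts as the identity on the corresponding summand; for $I$ with $x_0' \in I$, the map $\varphi_t$ is zero. Because the filtration maps preserve this summand decomposition, the diagram commutes summand by summand at the chain level; passing to homology then yields a natural transformation which is a pointwise isomorphism, hence an isomorphism of persistence modules.
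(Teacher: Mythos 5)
Your argument is correct and is exactly the paper's route: the paper's proof simply cites Proposition~\ref{prop_doubling_HHZK} together with ``obvious functorial properties of the doubling construction,'' and your naturality check (the filtration maps are diagonal with respect to the Hochster decomposition, while $\varphi_t$ is the projection killing the summands containing $x_0'$) is precisely the content of that remark, spelled out. The only cosmetic point is that Proposition~\ref{funin} does not itself assert compatibility with the Hochster indexing, but that compatibility is standard and your use of it is sound.
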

\begin{proof}
This follows from Proposition \ref{prop_doubling_HHZK} and obvious functorial properties of the doubling construction.
\end{proof}

\subsection{Stability for $\mathcal{PHZ}$}
First, we establish a stability result for the bigraded persistence module $\mathcal{PHZ}(X)$ of a finite pseudo-metric space $(X,d_X)$, see Definition~\ref{bgph}. 

\begin{theorem}\label{thm_BB_stable}
For two finite pseudo-metric spaces  $(X,d_X)$ and $(Y, d_Y)$ of the same cardinality, we have
\[
  d_{\IL}(\mathcal{PHZ}(X),\mathcal{PHZ}(Y))\leq
  2d_{GH}^{\prime}(X,Y),
\]
where $d'_{GH}$ is the modified Gromov--Hausdorff distance, see Definition \ref{d'gh}.
\end{theorem}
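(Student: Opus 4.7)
The plan is to combine the Hochster decomposition \eqref{hoch-pers} with the classical stability theorem (Theorem~\ref{stabph}). Under the hypothesis $|X|=|Y|$, a bijective correspondence is precisely the graph of a bijection $f\colon X\to Y$, and such an $f$ induces a bijection $J\mapsto f(J)$ between subsets of $X$ and subsets of $Y$. Using \eqref{hoch-pers}, this pairs the summands of $\mathcal{PHZ}(X)$ and $\mathcal{PHZ}(Y)$ via $\mathcal{PH}(J)\leftrightarrow\mathcal{PH}(f(J))$, reducing a bigraded problem to a finite family of single-graded problems that the classical theory already handles.

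Given this setup, the argument would proceed as follows. Fix $\varepsilon>2d'_{GH}(X,Y)$ and choose, by Definition~\ref{d'gh}, a bijection $f\colon X\to Y$ satisfying
\[
  \max_{x,x'\in X}|d_X(x,x')-d_Y(f(x),f(x'))|<\varepsilon.
\]
For every subset $J\subset X$, the restriction $f|_J$ is a bijective correspondence between $J$ and $f(J)$ whose distortion is bounded by the same maximum, so $2d_{GH}(J,f(J))<\varepsilon$. Theorem~\ref{stabph} applied to the Vietoris--Rips filtrations of $J$ and $f(J)$ then yields $d_{\IL}(\mathcal{PH}(J),\mathcal{PH}(f(J)))<\varepsilon$. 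Iterating Proposition~\ref{oplus} over the finite collection of subsets of $X$ and using \eqref{hoch-pers} gives
\[
  d_{\IL}(\mathcal{PHZ}(X),\mathcal{PHZ}(Y))\le\max_{J\subset X}d_{\IL}(\mathcal{PH}(J),\mathcal{PH}(f(J)))\le\varepsilon,
\]
and letting $\varepsilon\downarrow 2d'_{GH}(X,Y)$ finishes the proof.

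The one point that requires verification is the \emph{naturality} of \eqref{hoch-pers}: the Hochster decomposition must be compatible with the filtration inclusions $R(X,t_1)\hookrightarrow R(X,t_2)$, so that the pointwise isomorphism of Theorem~\ref{BPtheorem} upgrades to an isomorphism of persistence modules. This follows from Proposition~\ref{funin} together with the functoriality built into Theorem~\ref{BPtheorem}, and is essentially how \eqref{hoch-pers} is stated. Once this is granted, the rest of the argument is a formal combination of three ingredients: the elementary fact that restricting a bijection to a subset cannot increase distortion, the classical stability theorem for ordinary persistent homology, and the subadditivity of the interleaving distance under finite direct sums.
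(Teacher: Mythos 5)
Your proposal is correct and follows essentially the same route as the paper's proof: pick the optimal bijection $\theta$ realizing $d'_{GH}(X,Y)$, observe that its restriction to any subset $J\subset X$ has no larger distortion, apply the classical stability theorem (Theorem~\ref{stabph}) to each pair $(J,\theta(J))$, and assemble via the Hochster decomposition \eqref{hoch-pers} and Proposition~\ref{oplus}. The only cosmetic difference is your $\varepsilon$-limiting argument, which the paper avoids since the minimum over the finitely many bijections is attained.
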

\begin{proof}
Let $\theta \colon X \to Y$ be the bijection that realizes $d'_{GH}(X,Y)$. Then, for arbitrary subset $J\subset X$,  we have 
\begin{equation}\label{eq_d_prime_subset}
d'_{GH}(J, \theta(J)) \leq d'_{GH}(X,Y).
\end{equation}
Let $\mathcal{PH}(J)$ and $\mathcal{PH}{(\theta(J))}$ be the ordinary persistent homology modules~\eqref{eq_PH} corresponding to the subspaces $J$ and 
$\theta(J)$, respectively. Then, 
\begin{equation}\label{eq_W_infty_d_ptime}
  d_{\IL}\bigl(\mathcal{PH}(J), \mathcal{PH}(\theta(J))\bigl)
    \leq 2d_{GH}(J, \theta(J)) \leq 2d'_{GH}(J, \theta(J)) \leq 2d'_{GH}(X,Y),
\end{equation}
where the first inequality follows from the stability of persistent homology (Theorem~\ref{stabph}),
the second follows by Definition~\ref{d'gh} and the third is~\eqref{eq_d_prime_subset}. Hence, 
\begin{multline*}
  d_\IL(\mathcal{PHZ}(X), \mathcal{PHZ}(Y))= 
  d_\IL\Bigl( \bigoplus_{J\subset X} \mathcal{PH}(J), \bigoplus_{J\subset X}\mathcal{PH}(\theta(J))\Bigr)
  \\
  \leq \max \bigl\{ d_\IL \bigl(\mathcal{PH}(J), \mathcal{PH}(\theta(J))\bigr) \mid J \subset X \bigr\}
  \leq 2d'_{GH}(X,Y),
\end{multline*}
where the first identity follows from~\ref{hoch-pers}, the second to last inequality follows from Proposition~\ref{oplus} and the last inequality follows from~\eqref{eq_W_infty_d_ptime}. 
\end{proof}

When using field coefficients, we also get a stability result for the bigraded barcodes using Theorem~\ref{prop_isometry}:

\begin{corollary}
Suppose $(X,d_X)$ and $(Y,d_Y)$ are two finite pseudo-metric spaces of the same cardinality. Let $\BB(X)$ and $\BB(Y)$ be the bigraded barcodes corresponding to the persistence modules $\mathcal{PHZ}(X)$ and $\mathcal{PHZ}(Y)$. Then, 
\[
  W_\infty(\BB(X), \BB(Y)) \leq 2d'_{GH}(X,Y). 
\]
\end{corollary}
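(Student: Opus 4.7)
The plan is to deduce this corollary by combining the interleaving stability estimate of Theorem~\ref{thm_BB_stable} with the Isometry Theorem (Theorem~\ref{prop_isometry}), which converts an interleaving distance bound into a bottleneck distance bound on the corresponding barcodes.

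First, I would verify that the Isometry Theorem is applicable to $\mathcal{PHZ}(X)$ and $\mathcal{PHZ}(Y)$. Since $X$ and $Y$ are finite pseudo-metric spaces, each Vietoris--Rips complex $R(X,t)$ has finitely many simplices, hence the moment-angle complex $\mathcal Z_{R(X,t)}$ is a finite CW complex. Working over a field $\mathbf k$, each bigraded component $H_{-i,2j}(\mathcal Z_{R(X,t)})$ is a finite-dimensional $\mathbf k$-vector space, and the direct sum in~\eqref{eq_PHZ} is finite. Therefore $\mathcal{PHZ}(X)$ and $\mathcal{PHZ}(Y)$ are pointwise finite-dimensional persistence modules over $\RR_{\geq 0}$, so Theorem~\ref{prop_decomp_persitent_module} applies and they admit bigraded barcodes $\BB(X)$ and $\BB(Y)$, respectively. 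Theorem~\ref{prop_isometry} then yields
\[
  d_{\IL}(\mathcal{PHZ}(X),\mathcal{PHZ}(Y)) = W_\infty(\BB(X),\BB(Y)).
\]

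Finally, I would chain this identity with the inequality of Theorem~\ref{thm_BB_stable}, which gives $d_{\IL}(\mathcal{PHZ}(X),\mathcal{PHZ}(Y)) \leq 2\,d'_{GH}(X,Y)$, and conclude $W_\infty(\BB(X),\BB(Y)) \leq 2\,d'_{GH}(X,Y)$, as required.

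There is essentially no obstacle here: the content is entirely in Theorem~\ref{thm_BB_stable} (which required setting up the doubling machinery and using stability of ordinary persistent homology subsample by subsample through the Hochster decomposition~\eqref{hoch-pers}) and in the already quoted Isometry Theorem. The only point that deserves a line of justification is the pointwise finite-dimensionality needed to invoke Theorem~\ref{prop_isometry}, and this is immediate from finiteness of $X$ and $Y$.
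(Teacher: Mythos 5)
Your proposal is correct and matches the paper's argument exactly: the corollary is obtained by combining Theorem~\ref{thm_BB_stable} with the Isometry Theorem~\ref{prop_isometry}, the only hypothesis to check being pointwise finite-dimensionality over a field, which you rightly note is immediate from the finiteness of $X$ and $Y$.
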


The inequality in Theorem~\ref{thm_BB_stable} does not hold if we replace $d'_{GH}$ by $d_{GH}$ on the right hand side. Indeed, let $Y=X\sqcup \{x'\}$ be the doubling of $X$ at a point $x\in X$, see Definition~\ref{def_doubling_metric_sp}. Then $d_{GH}(X,Y)=0$ by Proposition~\ref{prop_d_GH_X_X'=0}. On the other hand, $d_{\IL}(\mathcal{PHZ}(X),\mathcal{PHZ}(Y))\ne0$ even when $X$ consists of two points with nonzero distance. See Example \ref{ex_double_two_points}.

\subsection{Stability for $\mathcal{PHHZ}$} 
The main result of this subsection (Theorem~\ref{thm_BB_HH_stable}) shows that the interleaving  distance between persistence modules $\mathcal{PHHZ}(X)$ and $\mathcal{PHHZ}(Y)$ defined in \eqref{eq_PHHZ} for two finite pseudo-metric spaces $(X,d_X)$ and $(Y,d_Y)$ is bounded above by the usual Gromov--Hausdorff distance between $X$ and $Y$. It also tells us that the persistent double homology module $\mathcal{PHHZ}(X)$ is more stable than the persistence module $\mathcal{PHZ}(X)$ defined by the ordinary homology of moment-angle complexes. 

\begin{theorem}\label{thm_BB_HH_stable}
Let $X$ and $Y$ be finite pseudo-metric spaces. Then
\[
d_{\IL}(\mathcal{PHHZ}(X), \mathcal{PHHZ}(Y))\leq 2d_{GH}(X,Y). 
\]
\end{theorem}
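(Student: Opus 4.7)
The plan is to reduce the statement to the already established stability for $\mathcal{PHZ}$ in Theorem~\ref{thm_BB_stable} by passing through doublings, and then to transfer the estimate from ordinary to double homology via the composition $\mathcal{PHHZ}(X)=\mathcal H\circ\mathcal{PHZ}(X)$ of~\eqref{phhcomposite} together with the functorial inequality in Proposition~\ref{prop_d_IL_composition}. The key point that makes this work is that doubling a vertex is a move that is invisible both to the Gromov--Hausdorff distance (Proposition~\ref{prop_d_GH_X_X'=0}) and to the persistent double homology module (Proposition~\ref{cor_Wasserstein_doubling}), even though it is \emph{not} invisible to $\mathcal{PHZ}$.

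First, apply Proposition~\ref{prop_dGH_d_prime_GH} to obtain iterated doublings $\widehat X$ and $\widehat Y$ of the same cardinality with
\[
  d_{GH}(X,Y)=d'_{GH}(\widehat X,\widehat Y).
\]
By Proposition~\ref{cor_Wasserstein_doubling}, there are isomorphisms of persistence modules $\mathcal{PHHZ}(X)\cong\mathcal{PHHZ}(\widehat X)$ and $\mathcal{PHHZ}(Y)\cong\mathcal{PHHZ}(\widehat Y)$, so in particular
\[
  d_{\IL}\bigl(\mathcal{PHHZ}(X),\mathcal{PHHZ}(Y)\bigr)
  =d_{\IL}\bigl(\mathcal{PHHZ}(\widehat X),\mathcal{PHHZ}(\widehat Y)\bigr).
\]

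Next, view $\mathcal{PHZ}(\widehat X)$ and $\mathcal{PHZ}(\widehat Y)$ as functors $\mathbb{R}_{\geq 0}\to\mathsf{dg}(\mathbf k\text{-}\mathrm{mod})$ as in~\eqref{phhcomposite}, so that $\mathcal{PHHZ}(\widehat X)=\mathcal H\circ\mathcal{PHZ}(\widehat X)$, and similarly for $\widehat Y$. Applying Proposition~\ref{prop_d_IL_composition} with $\mathcal F=\mathcal H$ gives
\[
  d_{\IL}\bigl(\mathcal{PHHZ}(\widehat X),\mathcal{PHHZ}(\widehat Y)\bigr)
  \le d_{\IL}\bigl(\mathcal{PHZ}(\widehat X),\mathcal{PHZ}(\widehat Y)\bigr).
\]
Since $|\widehat X|=|\widehat Y|$, Theorem~\ref{thm_BB_stable} applies and bounds the right-hand side by $2d'_{GH}(\widehat X,\widehat Y)=2d_{GH}(X,Y)$. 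Chaining the three inequalities yields the claim.

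The only delicate point is checking that $\mathcal{PHHZ}$ genuinely factors as $\mathcal H\circ\mathcal{PHZ}$ at the level of persistence modules (not just pointwise), so that Proposition~\ref{prop_d_IL_composition} is applicable; this is exactly the content of~\eqref{phhcomposite}, which in turn rests on the fact that the second differential $\partial'$ is natural with respect to inclusions of simplicial complexes (cf.~\cite[Proposition~4.8]{l-p-s-s}). Everything else is bookkeeping: the doubling reduction promotes the weaker estimate against $d'_{GH}$ to an estimate against $d_{GH}$, which is possible precisely because the asymmetry between $d_{GH}$ and $d'_{GH}$ exhibited after Theorem~\ref{thm_BB_stable} is killed upon passing from $\mathcal{PHZ}$ to $\mathcal{PHHZ}$.
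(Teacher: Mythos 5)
Your proposal is correct and follows exactly the same route as the paper's proof: reduce to equal-cardinality doublings via Proposition~\ref{prop_dGH_d_prime_GH}, use the doubling-invariance of $\mathcal{PHHZ}$ (Proposition~\ref{cor_Wasserstein_doubling}), pass from double to ordinary bigraded persistence via the factorization~\eqref{phhcomposite} and Proposition~\ref{prop_d_IL_composition}, and conclude with Theorem~\ref{thm_BB_stable}. No gaps.
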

\begin{proof}
Let $\widehat X$ and $\widehat Y$ be finite pseudo-metric spaces of the same cardinality obtained as iterated doublings of $X$ and $Y$ by the procedure discussed in the proof of Proposition~\ref{prop_dGH_d_prime_GH}. Then, we have 
\begin{multline*}
  d_{\IL}(\mathcal{PHHZ}(X), \mathcal{PHHZ}(Y))
  = d_{\IL}(\mathcal{PHHZ}(\widehat X), \mathcal{PHHZ}(\widehat Y))\\ \leq 
  d_{\IL}(\mathcal{PHZ}(\widehat X), \mathcal{PHZ}(\widehat Y))\leq
  2d'_{GH}(\widehat X,\widehat Y)=2d_{GH}(X,Y).
\end{multline*}
Here the first identity follows from Proposition~\ref{cor_Wasserstein_doubling}. 
The second inequality follows from decomposition~\eqref{phhcomposite} and 
Proposition~\ref{prop_d_IL_composition}. The third inequality is Theorem \ref{thm_BB_stable} and 
the last identity is  Proposition~\ref{prop_dGH_d_prime_GH}. 
\end{proof}

When using field coefficients we also obtain stability for the bigraded barcodes using Theorem~\ref{prop_isometry}:

\begin{corollary}
Let $\BDB(X)$ and $\BDB(Y)$ be the bigraded barcodes corresponding to the persistence modules $\mathcal{PHHZ}(X)$ and $\mathcal{PHHZ}(Y)$, respectively. Then, we have 
\[
W_\infty(\BDB(X), \BDB(Y)) \leq 2d_{GH}(X,Y). 
\]
\end{corollary}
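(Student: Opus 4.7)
The proof plan is a direct two-step application of results already established in the excerpt. First, I would verify that both persistence modules $\mathcal{PHHZ}(X)$ and $\mathcal{PHHZ}(Y)$ fit into the hypotheses of Theorem~\ref{prop_decomp_persitent_module}: since $X$ and $Y$ are finite, each Vietoris--Rips complex $R(X,t)$ (resp.\ $R(Y,t)$) has finitely many simplices, so the associated moment-angle complex has finite-dimensional ordinary bigraded homology over a field, and therefore finite-dimensional double homology $\HH_{-i,2j}(\mathcal Z_{R(X,t)})$ at every bidegree. Summed over the finitely many bidegrees $(-i,2j)$ appearing in~\eqref{eq_PHHZ}, this makes $\mathcal{PHHZ}(X)$ and $\mathcal{PHHZ}(Y)$ pointwise finite-dimensional persistence modules, so that the barcode decomposition and hence the Isometry Theorem apply.

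Next, I would invoke the Isometry Theorem (Theorem~\ref{prop_isometry}) to identify the bottleneck distance between the bigraded barcodes with the interleaving distance between the persistence modules, namely
\[
W_\infty(\BDB(X), \BDB(Y)) = d_{\IL}\bigl(\mathcal{PHHZ}(X), \mathcal{PHHZ}(Y)\bigr).
\]
Finally, Theorem~\ref{thm_BB_HH_stable} bounds the right-hand side by $2d_{GH}(X,Y)$, yielding the claim.

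There is essentially no obstacle: the stability of the interleaving distance has already been done in Theorem~\ref{thm_BB_HH_stable}, and its translation into a bottleneck bound on barcodes is the standard role of the Isometry Theorem. The only minor bookkeeping is the finite-dimensionality check, which is immediate from the finiteness of $X$ and $Y$ together with the Hochster-type decomposition underlying $H_{-i,2j}(\mathcal{Z}_K)$.
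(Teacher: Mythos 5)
Your proposal is correct and matches the paper's (implicit) argument exactly: over a field the modules $\mathcal{PHHZ}(X)$ and $\mathcal{PHHZ}(Y)$ are pointwise finite-dimensional, so Theorem~\ref{prop_isometry} converts the interleaving bound of Theorem~\ref{thm_BB_HH_stable} into the stated bottleneck bound. The finite-dimensionality check you include is the only detail the paper leaves unstated, and it is verified just as you describe.
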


\bibliographystyle{alpha}

\end{document}